\newtheoremstyle{tplain}{3pt}{3pt}{\rmfamily}{}{\bfseries}{.}{0.5em}{}
\theoremstyle{tplain}
\definecolor{darkgreen}{cmyk}{1,0,1,0}
\newtheorem{thm}{Theorem}
\newtheorem{lem}{Lemma}
\newtheorem{ex}{Example}
\newtheorem{cor}{Corollary}
\newtheorem{prop}{Proposition}
\newtheorem{obs}{Remark}
\newtheorem{defi}{Definition}
\newcommand{\blue}{\color{blue}}
\newcommand{\red}{\color{red}}
\newcommand{\green}{\color{green}}
\newcommand*\bigcdot{\mathpalette\bigcdot@{.5}}
\newcommand*\bigcdot@[2]{\mathbin{\vcenter{\hbox{\scalebox{#2}{$\m@th#1\bullet$}}}}}
\newcommand{\YT}[3]{
\vcenter{\hbox{
\begin{tikzpicture}[x={(0in,-#1)},y={(#1,0in)}] 
\foreach \rowi [count=\i] in {#3} {
 \foreach \e [count=\j] in \rowi {
  \draw (\i,\j) rectangle +(-1,-1);
  \draw (\i-0.5,\j-0.5) node {$#2\e$};
 }
}
\end{tikzpicture}
}}
}
\title[The symplectic left companion  of a  LR-Sundaram tableau and the Kwon condition] {The symplectic left companion  of a  Littlewood-Richardson-Sundaram  tableau \\and the Kwon property}
\author{Olga Azenhas}
\address{ University of Coimbra, CMUC, Department of Mathematics, Portugal}
\email{oazenhas@mat.uc.pt}
\keywords{Kwon and Sundaram branching model, Littlewood-Richardson-Sundaram  tableaux, left companion.}
\subjclass[2000]{05E05, 05E10, 05E14, 17B37, 68Q17}
\begin{document}

\begin{abstract} As a consequence of the Littlewood-Richardson (LR) commuters coincidence and the Kumar-Torres branching model  via  Kushwaha-Raghavan-Viswanath flagged hives, we have solved the Lecouvey- -Lenart conjecture on the  bijections between the Kwon and Sundaram
  branching models for the pair $({GL}_{2n}(\mathbb{C}), {Sp}_{2n}(\mathbb{C})) $ consisting of  the  general linear group ${GL}_{2n}(\mathbb{C})$
and the symplectic group ${Sp}_{2n}(\mathbb{C})$. In particular, thanks to the Henriques-Kamnitzer $gl_n$-crystal commuter, we have   recognized that the left companion of an LR-Sundaram tableau is characterized by the   Kwon  symplectic condition. We now use the construction of the left Gelfand-Tsetlin pattern, or left companion tableau, of an LR-Sundaram tableau to exhibit the Kwon symplectic property, a mirror of the flag on its right companion tableau. This is equivalent to the  restriction of the Berenstein-Gelfand-Zelevinsky LR model, on the interpretation of Gelfand-Tsetlin patterns, to symplectic Gelfand-Tsetlin patterns as left companions of LR-Sundaram tableaux.
\end{abstract}
\maketitle

\tableofcontents

\section{Introduction}
Consider $G$  a group and $\hat G$ a complete set of representatives of the
equivalence classes of certain irreducible $G$-modules \cite{watanabe}. Given  $H$  a subgroup of $G$, a natural and interesting problem
is to determine how and if a given irreducible $G$-module $V \in \hat G$ decomposes
into irreducible $H$-submodules  \cite{littlewood,sundaram,HTW,kwon18,watanabe,sathishtorres}:
\begin{align}V \simeq \bigoplus_{W\in \hat H}
W^{c^V_{W}}.\nonumber
\end{align}
The multiplicity  numbers $c^V_{W}$ are called \emph{branching coefficients} of the pair $(G,H)$.
An explicit description of the branching coefficients is called a \emph{branching rule} for the pair $(G,H)$. For example, the Littlewood-Richardson rule \cite{LR} gives a branching rule for the pair $({GL}_{m}(\mathbb{C})\times {GL}_{m}(\mathbb{C}), {GL}_{m}(\mathbb{C}))$.
In this note one considers the pair $(G,H)=(GL_{2n}(\mathbb{C}),Sp_{2n}(\mathbb{C}))$. The polynomial irreducible representations of $GL_{2n}(\mathbb{C})$ respectively $Sp_{2n}(\mathbb{C})$ are parameterized by partitions $\lambda$ of length $\le 2n$ respectively partitions $\mu$ of $\le n$, and we denote the corresponding  branching coefficient by $c^\lambda_\mu$.

Littlewood \cite{littlewood} has given a branching rule only in the case where both partitions have length $\le n$. Sundaram has given a complete branching rule \cite{sundaram, sundaram90} by counting certain Littlewood-Richardson (LR) tableaux, called  Littlewood-Richardson-Sundaram tableaux in \cite{leclen,sathishtorres}, and symplectic LR tableaux in \cite{watanabe}.
Schumann and Torres \cite{schumanntorres} proved a conjectural branching rule by Naito and Sagaki \cite{naitosagaki} in terms of Littelmann
paths.

Kwon \cite{kwon18} has provided branching rules for various pairs in particular the pair $(GL_{2n}(\mathbb{C}),Sp_{2n}(\mathbb{C}))$. To express the branching coefficients $c^\lambda_\mu$ he enumerates certain sets in  its
combinatorial spinor model \cite{kwon} for crystals of classical type. As expected   the problem of comparing the Sundaram and the Kwon branching rules  addresses. This has been considered by Lecouvey and Lenart \cite{leclen} by conjecturing an explicit bijection  between the two models via the combinatorial $R$-matrix realized by the Henriques-Kamnitzer LR commuter \cite{HK2,HenKam} which has several realizations \cite{knutson, fultonbuch,tyong2,akt16, az18v5,tka18, azkoma25,az18v5} depending also on the LR model.
Recently Kumar and Torres \cite{sathishtorres} use the LR commuter
  by Kushwaha–Raghavan–Viswanath for flagged hives \cite{krv21,krsv24} to establish a bijection between the Sundaram and Kwon branching models.
As conjectured by Pack and Vallejo \cite{pakvallejo} and proved in \cite{pakvallejo,DK08,akt16, azkoma25, az18v5} all these realizations coincide and henceforth the Lecouvey-Lenart conjecture \cite{leclen} is solved.

As  one shows in \cite{az18v5} the objects counted by the Kwon branching model, after a rephrasing of that model by Lecouvey-Lenart, are precisely the left companions of  LR-Sundaram tableaux. While Kumar-Torres use the right companion tableau of an  LR-Sundaram tableau in their bijection via flagged hives,  here, we use the left companion to  directly  show that   the Sundaram flag condition on an LR tableau   is mirrored on the left companion as a  symplectic Kwon property. In other words, the construction of the left companion tableau of an LR-Sundaram tableau gives a symplectic semistandard tableau, equivalently, a semistandard tableau satisfying the symplectic Kwon property. The symplectic condition on the left companion parallels the flag on   right companion tableau \cite{sathishtorres} of an LR-Sundaram tableau as in \cite{sathishtorres}. Indeed the interlocking of the companion Gelgand-Tsetlin pattern pair of an LR-Sundaram tableau, consisting of the left and the right companions in the form of Gelfand-Tsetlin patterns  \cite{gt50},
leads to a flagged hive.

 Right and left companions of an LR tableau    have  their origin in  the form of Gelfand-Tsetlin patterns \cite{gt50} in the works of Gelfand-Zelevinsky \cite{GZ85, gzpolyed} and Berenstein-Zelevinsky \cite{BZphy} in their interpretation of Gelfand-Tsetlin patterns as  LR models.  For the one-to-one correspondence between Gelfand-Tsetlin patterns and semistandard tableaux and applications to mathematical-physics, see for instance, Louck \cite{Louck}. (See also
 \cite{dookim,  akt16} and \cite[Appendix]{Nak05} for a crystal theoretic interpretation of the LR rule in terms of right companions.)  For the construction of  Gelfand-Tsetlin bases  for classical Lie algebras where instead of
semistandard tableaux, the basis vectors are parameterized by the combinatorial objects
called  Gelfand–Tsetlin patterns, see Molev's review  \cite{molev06}. It suffices here to say that the branching rule for the pair $(\mathfrak{gl}_m,\mathfrak{gl}_{m-1})$, the restriction of the general Lie algebra $\mathfrak{gl}_m$ to the subalgebra $\mathfrak{gl}_{m-1}$, is given by the between-ness conditions of their highest weights $\lambda^{(m)}$ and $\lambda^{(m-1)}$ which means that the skew diagram $\lambda^{(m)}/\lambda^{(m-1)}$ is a horizontal strip. These are  two rows of the Gelfand-Tsetlin pattern of type $\lambda=\lambda^{(m)}$ defined by the chain of embeddings $\mathfrak{gl}_m\supseteq\mathfrak{gl}_{m-1})\supseteq\cdots \supseteq\mathfrak{gl}_{1}$. We thus intertwine  semistandard tableaux with Gelfand-Tsetlin patterns.

To summarize our contribution we exhibit the branching models of Kwon and Sundaram via the left companion of a Littlewood-Richardson-Sundaram (LRS) tableau. In other words, we show that the Berenstein-Gelfand-Zelevinsky LR model, on the interpretation of Gelfand-Tsetlin patterns, restrict to symplectic Gelfand-Tsetlin patterns as left companions of LR-Sundaram tableaux.

More precisely, for partitions $\mu\subset \lambda$ with $\mu$ of length at most $n$ and $\lambda$ of length at most $2n$, the Sundaram branching rule says:
the branching coefficient $c_\mu^\lambda$
equals the cardinality
of the set
$$
LRS(\lambda,\mu):=\bigcup_\nu
LRS(\lambda/\mu,\nu)$$
where the union is taken over all even partitions $\nu$, and $LRS(\lambda/\mu,\nu)$ denotes the set consisting of the Littlewood-Richardson-Sundaram tableaux of shape $\lambda/\mu$ and weight the even partition $\nu$.

According  to a reformulation of the Kwon’s branching rule by Lecouvey–Lenart \cite[Section 8]{leclen} (see also \cite{sathishtorres}),
the branching coefficient
$c^\lambda_\mu$
equals the cardinality of the set
$$
LRK(\lambda,\mu):=\bigcup_\nu
LRK^\lambda_{\nu,\mu}$$
where the union is taken over all even partitions $\nu$, and $LRK^\lambda_{\nu,\mu}$ denotes the set of right companions $T$ (of shape $\mu$) of the  LR tableaux in $LR(\lambda/\nu,\mu)$ (note  $\nu$ and $\mu$ are swapped) whose evacuation (or Sch\"utzenberger involution) $S(T)$  satisfy the Kwon property (also called symplectic property). Those tableaux $S(T)$ are precisely the left companions of the set Littlewood-Richardson-Sundaram tableaux $LRS(\lambda/\mu,\nu)$.

Our main result is stated as follows and illustrated in Examples \ref{ex:example}. It describes how the Sundaram property violation mirrors on its left companion as a symplectic condition violation.

{\textbf{Main Theorem} \ref{th:main} Let $T\in LR(\lambda/\mu,\nu)$ with $\nu$ even, on the alphabet $[2n]$, and $ \ell(\mu)\le n$.
 $T$ does not satisfy the Sundaram property if and only if $G_\mu(T)$, the left companion of $T$, is not symplectic.

Moreover,  in this case, there exists a unique $t\ge 0$ such that the following are equivalent
\begin{enumerate}

\item  ${n+t+1}$  is  the minimal row number of $T$ where the  Sundaram property  violation occurs.

\item $T(n+1,1)\ge 2,\, T(n+2,1)\ge 4,\dots, T(n+t+1-2,1)\ge 2(t+1-2)$ and
  \begin{align*}T(n+t,1)=2t,\; T(n+t+1,1)= 2t+1,\; T(n+t+2,1)=2(t+1).\qquad\qquad\qquad\qquad\qquad\quad 
 \end{align*}
 \item  the maximal row    of $G_\mu(T)$ where a symplectic violation  occurs is among the bottom most $t+1$ cells  $(\ell(\mu),1)$,  $(\ell(\mu)-1,1),\dots$, or $ (\ell(\mu)-t,1) $ of the first column of $G_\mu(T)$.
\end{enumerate}

As a corollary, one obtains  the restriction of the Berenstein-Gelfand-Zelevinsky LR model on the interpretation of the left Gelfand-Tsetlin patterns \cite[Theorem 4.3]{BZphy}, to symplectic left Gelfand-Tsetlin patterns as left companions of LR-Sundaram tableaux.

{\textbf{Corollary} The Berenstein-Gelfand-Zelevinsky LR model on  Gelfand-Tsetlin patterns in \cite[Theorem]{BZphy} restricts to symplectic left Gelfand-Tsetlin patterns as left companions of LR-Sundaram tableuax. In other words, the Berenstein-Gelfand-Zelevinsky LR model in \cite[Theorem 4.3]{BZphy} restricted to symplectic Gelfand-Tselin patterns is in bijection with symplectic LR  (or LR-Sundaram) tableaux.
}
\medskip

Watanabe \cite{watanabe} has also recently    established a new branching rule for the pair   $(GL_{2n}(\mathbb{C}), Sp_{2n}(\mathbb{C}))$. It is an
interesting question to ask  how the Watanabe branching rule bijects  to Sundaram or Kwon branching rules.

This paper is organized in four sections. Section \ref{sec:preliminaries0} introduces the relevant notation and how semistandard tableaux and Gelfand-Tsetlin patterns are in natural bijection.  Section \ref{sec:symplectic} introduces semistandard symplectic tableaux (King tableaux rephrased in the alphabet $[2n]$) \cite{watanabe} also called tableaux satisfying the Kwon property \cite{sathishtorres} and define the corresponding symplectic Gelfand-Tsetlin patterns in Definition \ref{def:sympgt}. The left companion of an LR tableau and Littlewood-Richardson-Sundaram tableaux are defined in Section \ref{sec:lrsymplectic}. Left Gelfand-Tsetlin patterns are left companions of LR tableaux and constitute a Berenstein-Gelfand-Zelevinsky LR model \cite{BZphy}. The main result and examples also appear in Section \ref{sec:lrsymplectic}.

\section*{Acknowledgements}
The author acknowledges financial support by the Centre for Mathematics of the University of Coimbra (CMUC, https://doi.org/10.54499/UID/00324/2025) under the Portuguese Foundation for Science and Technology (FCT), Grants UID/00324/2025 and UID/PRR/00324/2025.

\section{Preliminaries}\label{sec:preliminaries0}

\subsection{Partitions,  semistandard tableaux and Gelfand-Tsetlin patterns}\label{sec:preliminaries}
A \emph{partition} is a weakly decreasing sequence of nonnegative integers $\gamma = \gamma_1\ge \gamma_2 \ge \cdots$
such that $\gamma_k =0$ for some $k \ge1$. The \emph{length} of $\gamma$ is the maximal $i$ such that $\gamma_i> 0$ also called the \emph{number}
of \emph{parts} or \emph{length} of $\gamma$.  We write the partition $\gamma$ as a vector
 $\gamma = (\gamma_1, \gamma_2, \dots,\gamma_k)$ for $k \ge  \ell(\gamma)$. A partition $\gamma$ is identified
with its \emph{Young diagram} $D(\gamma)$ which is a left and top justified collection of boxes (or cells)
with $\gamma_k$ many boxes in the $k$th row for all $k \in \mathbb{Z}_> 0$. In particular, the empty Young diagram and  the  partition $(0)$ are identified. The number of cells of $D(\gamma)$ is  the sum of the parts of $\gamma$ and is denoted by $|\gamma|$.
The boxes or cells of the Young diagram of $\gamma$ are identified by its  coordinates $(i,j)$ in the matrix style, that is, $1\le i\le \ell(\gamma)$ and $1\le j\le \gamma_i$.

Let $\gamma$, $\mu$  partitions   with $\mu \subset \gamma$, that is,  $\mu_i\le  \gamma_i$ for all $ i\in\mathbb{Z}_>0 $, or the Young diagram of $\mu$ is a subset of
the Young diagram of $\gamma$. A \emph{semistandard
tableau} $T$ of (skew) shape $\gamma/\mu$ is a map (or a filling of $D(\gamma)$)
 $$T:D(\gamma)\rightarrow \mathbb{Z}_\ge0, \; T(i,j)\mapsto T(i,j),$$
assigning a nonnegative integer to each box of $\gamma$  such that it is weakly increasing as we go from left to right along a row and strictly
increasing as we go from top to bottom along a column, and equal to $0$  in the boxes corresponding to $\mu$,
\begin{align}&T(i,j)\le T(i,j+1),\; T(i,j)<T(i+1,j), \mbox{ for all $(i,j)\in D(\gamma)\setminus D(\mu)$,}\nonumber\\
&\mbox{ and }  T(i,j)=0, \mbox{ for $(i,j)\in  D(\mu)$,}
\nonumber
\end{align}
where we set $T(a,b):=\infty$ if $(a, b) \notin D(\gamma)$.
 Usually $T(i,j)$ is just referred as
the entry in the box $(i,j)$ and we omit the zeroes  in the boxes of $\mu$. A positive integer
$m\ge \ell(\gamma)$ will be fixed and $[0, m]:=\{0, 1, \dots, m\}$ will be used as a co-domain for map $T$. We call
$[m]:=\{1, \dots, m\}$ the alphabet of the semistandard tableau $T$.
 In this case, we will denote the set of semistandard tableaux of shape $\gamma/\mu$ by
$SST_k(\gamma/\mu)$. When $\mu=(0)$, we just write $SST_m(\gamma)$. The \emph{weight} or \emph{content} of $T$ is the nonnegative vector $(\alpha_1,\dots,  \alpha_m)$, where $\alpha_i:=\#\{(a,b)\in D(\gamma):T(a,b)=i\}$ for $i\in[m]$, that is, $\alpha_i$ is  the number of occurrences of $i$ in the tableau $T$.

The \emph{reverse} \emph{row} \emph{word} of a semistandard tableau
$T$, denoted $w(T)=w_1\cdots w_l$, with $l$ the number of non zero entries of $T$,  is obtained by reading the entries of its rows (excluding the entry 0) right to left
starting from the top row and proceeding downward. The \emph{weight} of the \emph{word}  $w(T)$ is the weight of $T$. A \emph{Yamanouchi} \emph{word} is a word $w =w_1 \cdots w_l$ such
that, for each $1 \le k \le l$,  the  weight of the subword  $w_1 \cdots w_k$ is a partition.

A semistandard Young tableau $G\in SST_m(\gamma)$ of content $\alpha$ is also realized by the \emph{Gelfand-Tsetlin} (GT) \emph{pattern} of type $\gamma=\gamma^{(m)}$ and weight $\alpha$ \cite{gt50} defined by the sequence of nested partitions
$$\gamma = \gamma^{(m)}\supseteq  \gamma^{(m-1)}\supseteq\cdots\supseteq \gamma^{(1)}$$
where $ \gamma^{(i)}$ of length $\le i$ defines  the filling of the boxes of $G$ on the alphabet $[m]$, for  $1\le i\le m$.
It is
displayed here as a triangular array of
non-negative integers $(\gamma_j^{(i)})_{1\leq j\leq i\leq m}$
as below:
\begin{equation}\label{gtpattern}
\begin{array}{cccccccccccccccccc}
&&&&\gamma_1^{(1)}\cr
&&&\gamma_1^{(2)}&&\gamma_2^{(2)}\cr
&&\cdots&&\cdots&&\cdots\cr
&\gamma_1^{(m-1)}&&\gamma_2^{(m-1)}&&\cdots&&\gamma_{m-1}^{(m-1)}\cr
\gamma_1^{(m)}&&\gamma_2^{(m)}&&\cdots&&\gamma_{m-1}^{(m)}&&\gamma_m^{(m)}&\cr
\end{array}
\end{equation}

The entries satisfy the between-ness conditions $\gamma_j^{(i+1)}\geq \gamma_j^{(i)}\geq \gamma_{j+1}^{(i+1)}$
for $1\leq j\leq i<m$. The $i$th row, enumerated from top to bottom, necessarily constitutes the partition $\gamma^{(i)}$
of length $\leq i$. Its weight is defined to be $\alpha=(|\gamma^{(1)}|-|\gamma^{(0)}|,|\gamma^{(2)}|-|\gamma^{(1)}|,\dots, |\gamma^{(m)}|-|\gamma^{(m-1)}|)$, where  $\gamma^{(0)}=0$.

Semistandard tableaux and Gelfand-Tsetlin patterns are naturally in bijection and we intertwine  the two presentations. Equivalently, $\gamma^{(i)}=G^{-1}([i])$ the pre-image of $[i]$, for $1\le i\le m$. The
semistandard condition translates to the condition that $ \gamma^{(i)}/\gamma^{(i-1 )}$ is a horizontal
strip, that is in any column of the Young diagram of $\gamma^{(i)}$, there is at most one box of
$\gamma^{(i)}$ that is not a box of $\gamma^{(i-1 )}$. In other words, $0\le \ell(\gamma^{(i)})-\ell(\gamma^{(i-1 )})\le 1$.
Henceforth, the first column of $G$ records in  \emph{strictly decreasing order}, from bottom to top, the $i$'s in $[m]$ such that  $\ell(\gamma^{(i)})-1=\ell(\gamma^{(i-1)})$. In other words,
\begin{align}&G(s,1)=b>G(s-1,1)=a, \mbox{ for some $s\in [2,\ell(\gamma)]$}\label{gt1}\\
&\qquad \qquad\Leftrightarrow \nonumber\\
&\ell(\gamma^{(b)})-1=\ell(\gamma^{(a)}), \mbox{ and  $\ell(\gamma^{(x)})=\ell(\gamma^{(a)})$, for all $a\le x<b$}.\label{gt2}
 \end{align}
  Since the shape of $G\in SST_m(\gamma)$ is $\gamma$ and the first column has length $\ell(\gamma)$, the following is an immediate  consequence of the  definition of Gelfand-Tsetlin pattern and  a rephrasing of \eqref{gt1}, \eqref{gt2}.

\begin{lem} \label{lem:leftcomp}Let $n\in\mathbb{N}$. Let $G\in SST_{2n}(\mu)$ with GT pattern of type $\mu$ defined by the nested sequence of partitions
\begin{align}\mu=\mu^{(2n)}\supseteq \mu^{(2n-1)}
\supseteq \cdots \supseteq \mu^{(2)}\supseteq\mu^{(1)}.\label{nestedleft}
\end{align}
The first column of $G$ of length $\ell(\mu)$ read bottom to top is equal to $2n\ge r_{\ell(\mu)}>\cdots>r_i>\cdots >r_1\ge 1$, that is,
\begin{align}G(\ell(\mu),1)=r_{\ell(\mu)}>G(\ell(\mu)-1,1)=r_{\ell(\mu)-1}>\cdots> G(i,1)= r_i >\cdots > G(1,1)=r_1\nonumber\end{align}
if and only if the subsequence of the GT pattern \eqref{nestedleft} consisting of the rows $r_{\ell(\mu)}>\cdots>r_i>\cdots >r_1$, read bottom to top,
 \begin{align}\mu^{(r_{\ell(\mu)})}\supset \mu^{(r_{\ell(\mu)-1})}
\supset \cdots \supset \mu^{(r_i)}\supset\cdots\supset \mu^{(r_2)}\supset\mu^{(r_1)} \label{firstcolumnsequence}
\end{align}
have respectively  lengths  $\ell(\mu), \ell(\mu)-1,\dots, i,\dots,2,1$, with
$\ell(\mu^{(s)})=\ell(\mu^{(r_{i})})$,  for any $r_{i+1}>s\ge  r_{i}$, and  $i=0,1,\dots, \ell(\mu)$.
    We set $\mu^{(r_0)}:=0$, and $r_{\ell{\mu)+1}}:={2n+1}$.

   The subsequence \eqref{firstcolumnsequence} is called the $\emph{1}$-\emph{subarray} of the GT pattern $G$.
 \end{lem}
 Let $G^0=G$.
 For $j=1,\cdots,\mu_1-1$, define $G^j$ to be the GT pattern obtained  by subtracting one unity to  each  positive entry in $G^{j-1}$.

Iterating $\mu_1-1$ times the procedure in Lemma \ref{lem:leftcomp},
the \emph{$j$-subarray} of $G$ is the $1$-subarray of $G^{j-1}$, for $j=1,\dots, \mu_1$. The $j$-subarray of $G$ or $1$-subarray of $G^{j-1}$ encodes the $j$ column of $G\in SST(\mu)$, for $j=1,\dots,\mu_1$.
The GT pattern of the $j$-column of the semistandard tableau $G$ is obtained from the GT pattern $G^{j-1}$ by replacing each positive entry with $1$, for $j=1,\dots,\mu_1$.
 \begin{ex} \label{ex:nested}Let $G\in SST_8(\mu)$ with $\mu=(4,4,2,1)$ and weight $(0,0,1,3,1,3,2,2)$,
 \begin{align}G=  \YT{0.15in}{}{
 {3,4,{4},{6}},
 {4,{5},{7},{7}},
 {6,6,{8}},
 {{8}},
} \mbox{ and its  GT pattern of type $\mu$}
\mbox{$\vcenter{\hbox{\begin{tikzpicture}[x={(1cm*0.2,-1.7320508cm*0.2)},y={(1cm*0.2,1.7320508cm*0.2)}]
\path(0,0)--node[pos=0.45]{$\blue\mathbf{4}$}(1,1);
\path(1,1)--node[pos=0.45]{$\blue\mathbf{4}$}(2,2);
\path(2,2)--node[pos=0.45]{$\blue\mathbf{3}$}(3,3);
\path(3,3)--node[pos=0.45]{$\blue\mathbf{1}$}(4,4);
\path(4,4)--node[pos=0.45]{$0$}(5,5);
\path(5,5)--node[pos=0.45]{$0$}(6,6);
\path(6,6)--node[pos=0.45]{$0$}(7,7);
\path(7,7)--node[pos=0.45]{$0$}(8,8);
\path(0,1)--node[pos=0.45]{$4$}(1,2);
\path(1,2)--node[pos=0.45]{$4$}(2,3);
\path(2,3)--node[pos=0.45]{$2$}(3,4);
\path(3,4)--node[pos=0.45]{$0$}(4,5);
\path(4,5)--node[pos=0.45]{$0$}(5,6);
\path(5,6)--node[pos=0.45]{$0$}(6,7);
\path(6,7)--node[pos=0.45]{$0$}(7,8);
\path(0,2)--node[pos=0.45]{$\blue\mathbf{4}$}(1,3);
\path(1,3)--node[pos=0.45]{$\blue\mathbf{2}$}(2,4);
\path(2,4)--node[pos=0.45]{$\blue\mathbf{2}$}(3,5);
\path(3,5)--node[pos=0.45]{$0$}(4,6);
\path(4,6)--node[pos=0.45]{$0$}(5,7);
\path(5,7)--node[pos=0.45]{$0$}(6,8);
\path(0,3)--node[pos=0.45]{$3$}(1,4);
\path(1,4)--node[pos=0.45]{$2$}(2,5);
\path(2,5)--node[pos=0.45]{$0$}(3,6);
\path(3,6)--node[pos=0.45]{$0$}(4,7);
\path(4,7)--node[pos=0.45]{$0$}(5,8);
\path(0,4)--node[pos=0.45]{$\blue\mathbf{3}$}(1,5);
\path(1,5)--node[pos=0.45]{$\blue\mathbf{1}$}(2,6);
\path(2,6)--node[pos=0.45]{$0$}(3,7);
\path(3,7)--node[pos=0.45]{$0$}(4,8);
\path(0,5)--node[pos=0.45]{$\blue\mathbf{1}$}(1,6);
\path(1,6)--node[pos=0.45]{$0$}(2,7);
\path(2,7)--node[pos=0.45]{$0$}(3,8);
\path(0,6)--node[pos=0.45]{$0$}(1,7);
\path(1,7)--node[pos=0.45]{$0$}(2,8);
\path(0,7)--node[pos=0.45]{$0$}(1,8);
\end{tikzpicture}}}$
}\nonumber
\end{align}
with defining  nested sequence of partitions
\begin{align} 
&\mu=\mu^{(8)}=(4,4,3,1,0^4)\supseteq \mu^{(7)}=(4,4,2,0^4)\supseteq \mu^{(6)}=(4,2,2,0^3)\supseteq\mu^{(5)}=(3,2,0^3)\supseteq \mu^{(4)}=(3,1,0^2)\nonumber \\
&\supseteq \mu^{(3)}=(1,0^2)\supseteq \mu^{(2)}=(0^2)=\mu^{(1)}=(0),\nonumber
\end{align}
and $1$-subarray  $\mu^{(8)}\supseteq \mu^{(6)}\supseteq \mu^{(4)}\supseteq \mu^{(3)}$ in blue.

The first column of $G$ records the supra indices or row numbers $8>6>4>3$ of  the  $1$-subarray 
of the GT pattern $G$,
$$\mu^{(8)}\supseteq \mu^{(6)}\supseteq \mu^{(4)}\supseteq \mu^{(3)},$$
\begin{align}&\ell(\mu^{(8)})=4&>\ell(\mu^{(7)})=\ell(\mu^{(6)})=3&>\ell(\mu^{(5)})=\ell(\mu^{(4)})=2&>\ell(\mu^{(3)})=1&
>\ell(\mu^{(2)})=\ell(\mu^{(1)})=0\nonumber
\end{align}
From the $2$
-subarray of the GT-pattern $G$, the sequence of numbers $6>5>4$ are the entries of the second column of $G$. From the $3$
-subarray of the GT-pattern $G$, the sequence of numbers $8>7>4$ are the entries of the third column of $G$,  and from the $4$-subarray
we get $7>6$ the fourth column of $G$. These $j$-subarrays of $G$, for $j=2,3,4$, are displayed below in blue

\begin{align*}\begin{tikzpicture}[x={(1cm*0.2,-1.7320508cm*0.2)},y={(1cm*0.2,1.7320508cm*0.2)}]
\path(0,0)--node[pos=0.45]{${3}$}(1,1);
\path(1,1)--node[pos=0.45]{${3}$}(2,2);
\path(2,2)--node[pos=0.45]{${2}$}(3,3);
\path(3,3)--node[pos=0.45]{${0}$}(4,4);
\path(4,4)--node[pos=0.45]{$0$}(5,5);
\path(5,5)--node[pos=0.45]{$0$}(6,6);
\path(6,6)--node[pos=0.45]{$0$}(7,7);
\path(7,7)--node[pos=0.45]{$0$}(8,8);
\path(0,1)--node[pos=0.45]{$3$}(1,2);
\path(1,2)--node[pos=0.45]{$3$}(2,3);
\path(2,3)--node[pos=0.45]{$1$}(3,4);
\path(3,4)--node[pos=0.45]{$0$}(4,5);
\path(4,5)--node[pos=0.45]{$0$}(5,6);
\path(5,6)--node[pos=0.45]{$0$}(6,7);
\path(6,7)--node[pos=0.45]{$0$}(7,8);
\path(0,2)--node[pos=0.45]{$\blue\mathbf{3}$}(1,3);
\path(1,3)--node[pos=0.45]{$\blue\mathbf{1}$}(2,4);
\path(2,4)--node[pos=0.45]{$\blue\mathbf{1}$}(3,5);
\path(3,5)--node[pos=0.45]{$0$}(4,6);
\path(4,6)--node[pos=0.45]{$0$}(5,7);
\path(5,7)--node[pos=0.45]{$0$}(6,8);
\path(0,3)--node[pos=0.45]{$\blue\mathbf{2}$}(1,4);
\path(1,4)--node[pos=0.45]{$\blue\mathbf{1}$}(2,5);
\path(2,5)--node[pos=0.45]{$0$}(3,6);
\path(3,6)--node[pos=0.45]{$0$}(4,7);
\path(4,7)--node[pos=0.45]{$0$}(5,8);
\path(0,4)--node[pos=0.45]{$\blue\mathbf{2}$}(1,5);
\path(1,5)--node[pos=0.45]{${0}$}(2,6);
\path(2,6)--node[pos=0.45]{$0$}(3,7);
\path(3,7)--node[pos=0.45]{$0$}(4,8);
\path(0,5)--node[pos=0.45]{${0}$}(1,6);
\path(1,6)--node[pos=0.45]{$0$}(2,7);
\path(2,7)--node[pos=0.45]{$0$}(3,8);
\path(0,6)--node[pos=0.45]{$0$}(1,7);
\path(1,7)--node[pos=0.45]{$0$}(2,8);
\path(0,7)--node[pos=0.45]{$0$}(1,8);
\end{tikzpicture}&\qquad
\begin{tikzpicture}[x={(1cm*0.2,-1.7320508cm*0.2)},y={(1cm*0.2,1.7320508cm*0.2)}]
\path(0,0)--node[pos=0.45]{$\blue\mathbf{2}$}(1,1);
\path(1,1)--node[pos=0.45]{$\blue\mathbf{2}$}(2,2);
\path(2,2)--node[pos=0.45]{$\blue\mathbf{1}$}(3,3);
\path(3,3)--node[pos=0.45]{${0}$}(4,4);
\path(4,4)--node[pos=0.45]{$0$}(5,5);
\path(5,5)--node[pos=0.45]{$0$}(6,6);
\path(6,6)--node[pos=0.45]{$0$}(7,7);
\path(7,7)--node[pos=0.45]{$0$}(8,8);
\path(0,1)--node[pos=0.45]{$\blue\mathbf{2}$}(1,2);
\path(1,2)--node[pos=0.45]{$\blue\mathbf{2}$}(2,3);
\path(2,3)--node[pos=0.45]{$0$}(3,4);
\path(3,4)--node[pos=0.45]{$0$}(4,5);
\path(4,5)--node[pos=0.45]{$0$}(5,6);
\path(5,6)--node[pos=0.45]{$0$}(6,7);
\path(6,7)--node[pos=0.45]{$0$}(7,8);
\path(0,2)--node[pos=0.45]{${2}$}(1,3);
\path(1,3)--node[pos=0.45]{${0}$}(2,4);
\path(2,4)--node[pos=0.45]{${0}$}(3,5);
\path(3,5)--node[pos=0.45]{$0$}(4,6);
\path(4,6)--node[pos=0.45]{$0$}(5,7);
\path(5,7)--node[pos=0.45]{$0$}(6,8);
\path(0,3)--node[pos=0.45]{${1}$}(1,4);
\path(1,4)--node[pos=0.45]{${0}$}(2,5);
\path(2,5)--node[pos=0.45]{$0$}(3,6);
\path(3,6)--node[pos=0.45]{$0$}(4,7);
\path(4,7)--node[pos=0.45]{$0$}(5,8);
\path(0,4)--node[pos=0.45]{$\blue\mathbf{1}$}(1,5);
\path(1,5)--node[pos=0.45]{${0}$}(2,6);
\path(2,6)--node[pos=0.45]{$0$}(3,7);
\path(3,7)--node[pos=0.45]{$0$}(4,8);
\path(0,5)--node[pos=0.45]{${0}$}(1,6);
\path(1,6)--node[pos=0.45]{$0$}(2,7);
\path(2,7)--node[pos=0.45]{$0$}(3,8);
\path(0,6)--node[pos=0.45]{$0$}(1,7);
\path(1,7)--node[pos=0.45]{$0$}(2,8);
\path(0,7)--node[pos=0.45]{$0$}(1,8);
\end{tikzpicture}&
\begin{tikzpicture}[x={(1cm*0.2,-1.7320508cm*0.2)},y={(1cm*0.2,1.7320508cm*0.2)}]
\path(0,0)--node[pos=0.45]{${1}$}(1,1);
\path(1,1)--node[pos=0.45]{${1}$}(2,2);
\path(2,2)--node[pos=0.45]{${0}$}(3,3);
\path(3,3)--node[pos=0.45]{${0}$}(4,4);
\path(4,4)--node[pos=0.45]{$0$}(5,5);
\path(5,5)--node[pos=0.45]{$0$}(6,6);
\path(6,6)--node[pos=0.45]{$0$}(7,7);
\path(7,7)--node[pos=0.45]{$0$}(8,8);
\path(0,1)--node[pos=0.45]{$\blue\mathbf{1}$}(1,2);
\path(1,2)--node[pos=0.45]{$\blue\mathbf{1}$}(2,3);
\path(2,3)--node[pos=0.45]{$0$}(3,4);
\path(3,4)--node[pos=0.45]{$0$}(4,5);
\path(4,5)--node[pos=0.45]{$0$}(5,6);
\path(5,6)--node[pos=0.45]{$0$}(6,7);
\path(6,7)--node[pos=0.45]{$0$}(7,8);
\path(0,2)--node[pos=0.45]{$\blue\mathbf{1}$}(1,3);
\path(1,3)--node[pos=0.45]{${0}$}(2,4);
\path(2,4)--node[pos=0.45]{${0}$}(3,5);
\path(3,5)--node[pos=0.45]{$0$}(4,6);
\path(4,6)--node[pos=0.45]{$0$}(5,7);
\path(5,7)--node[pos=0.45]{$0$}(6,8);
\path(0,3)--node[pos=0.45]{${0}$}(1,4);
\path(1,4)--node[pos=0.45]{${0}$}(2,5);
\path(2,5)--node[pos=0.45]{$0$}(3,6);
\path(3,6)--node[pos=0.45]{$0$}(4,7);
\path(4,7)--node[pos=0.45]{$0$}(5,8);
\path(0,4)--node[pos=0.45]{${0}$}(1,5);
\path(1,5)--node[pos=0.45]{${0}$}(2,6);
\path(2,6)--node[pos=0.45]{$0$}(3,7);
\path(3,7)--node[pos=0.45]{$0$}(4,8);
\path(0,5)--node[pos=0.45]{${0}$}(1,6);
\path(1,6)--node[pos=0.45]{$0$}(2,7);
\path(2,7)--node[pos=0.45]{$0$}(3,8);
\path(0,6)--node[pos=0.45]{$0$}(1,7);
\path(1,7)--node[pos=0.45]{$0$}(2,8);
\path(0,7)--node[pos=0.45]{$0$}(1,8);
\end{tikzpicture}.
\end{align*}
\end{ex}

\section{Symplectic tableaux and symplectic Gelfand-Tsetlin patterns}\label{sec:symplectic}
From now on, we fix $n\in \mathbb{N}$ throughout the paper. Let $\gamma$ be a partition with length $\ell(\gamma)\le 2n$. The partition $\gamma$ is said to be \emph{even} if $\gamma_{2i-1}=\gamma_{2i}$ for all $i\ge 1$. In other words, all columns of $\gamma$ have even length and necessarily the length of $\gamma$ is even.
Let $SST_{2n}(\gamma)$ be the set of all
semistandard tableaux of shape $\gamma$ with entries in $[2n]:=\{1,\dots,2n\}$.

\begin{defi}\cite{king76} \label{def:symp}A semistandard tableau $G \in SST_{2n}(\gamma)$ is said to
be symplectic if
$$G(k, 1) \ge 2k-1, \mbox{  for all $  k \in  [ \ell(\gamma)]$}.$$
Let $SpT_{2n}(\gamma)$ denote the set of all symplectic tableaux of shape $\gamma$ on the alphabet $[2n]$.
\end{defi}

As a consequence of the bijective correspondence between $SST_{2n}(\gamma)$ and the GT pattern presentation one has the definition of symplectic GT pattern.
\begin{defi}\label{def:sympgt} The Gelfand-Tsetlin  pattern $G$ of type $\gamma$ defined by the nested sequence of partitions
\begin{align*}\gamma=\gamma^{(2n)}\supseteq \gamma^{(2n-1)}
\supseteq \cdots \supseteq \gamma^{(2)}\supseteq\gamma^{(1)}.
\end{align*}  is said to be symplectic if its  $1$-subarray
~
\begin{align*}\gamma^{(r_{\ell(\gamma)})}\supset \gamma^{(r_{\ell(\gamma)-1})}
\supset \cdots \supset \gamma^{(r_i)}\supset\cdots\supset \gamma^{(r_1)}
\end{align*}
 satisfy
$$r_k=G(k,1)\ge 2k-1, \mbox{ $k=1,\dots,\ell(\mu).$}$$
\end{defi}

In Example \ref{ex:nested}, $G\in SST_8$  is symplectic because the 1st column $$  \YT{0.15in}{}{
 {3},
 {4},
 {6},
 {{8}},
} \in SpT_8 \mbox{ is symplectic},$$
similarly its GT pattern is symplectic
$$\begin{tikzpicture}[x={(1cm*0.2,-1.7320508cm*0.2)},y={(1cm*0.2,1.7320508cm*0.2)}]
\path(0,0)--node[pos=0.45]{$\blue\mathbf{1}$}(1,1);
\path(1,1)--node[pos=0.45]{$\blue\mathbf{1}$}(2,2);
\path(2,2)--node[pos=0.45]{$\blue\mathbf{1}$}(3,3);
\path(3,3)--node[pos=0.45]{$\blue\mathbf{1}$}(4,4);
\path(4,4)--node[pos=0.45]{$0$}(5,5);
\path(5,5)--node[pos=0.45]{$0$}(6,6);
\path(6,6)--node[pos=0.45]{$0$}(7,7);
\path(7,7)--node[pos=0.45]{$0$}(8,8);
\path(0,1)--node[pos=0.45]{$1$}(1,2);
\path(1,2)--node[pos=0.45]{$1$}(2,3);
\path(2,3)--node[pos=0.45]{$1$}(3,4);
\path(3,4)--node[pos=0.45]{$0$}(4,5);
\path(4,5)--node[pos=0.45]{$0$}(5,6);
\path(5,6)--node[pos=0.45]{$0$}(6,7);
\path(6,7)--node[pos=0.45]{$0$}(7,8);
\path(0,2)--node[pos=0.45]{$\blue\mathbf{1}$}(1,3);
\path(1,3)--node[pos=0.45]{$\blue\mathbf{1}$}(2,4);
\path(2,4)--node[pos=0.45]{$\blue\mathbf{1}$}(3,5);
\path(3,5)--node[pos=0.45]{$0$}(4,6);
\path(4,6)--node[pos=0.45]{$0$}(5,7);
\path(5,7)--node[pos=0.45]{$0$}(6,8);
\path(0,3)--node[pos=0.45]{$1$}(1,4);
\path(1,4)--node[pos=0.45]{$1$}(2,5);
\path(2,5)--node[pos=0.45]{$0$}(3,6);
\path(3,6)--node[pos=0.45]{$0$}(4,7);
\path(4,7)--node[pos=0.45]{$0$}(5,8);
\path(0,4)--node[pos=0.45]{$\blue\mathbf{1}$}(1,5);
\path(1,5)--node[pos=0.45]{$\blue\mathbf{1}$}(2,6);
\path(2,6)--node[pos=0.45]{$0$}(3,7);
\path(3,7)--node[pos=0.45]{$0$}(4,8);
\path(0,5)--node[pos=0.45]{$\blue\mathbf{1}$}(1,6);
\path(1,6)--node[pos=0.45]{$0$}(2,7);
\path(2,7)--node[pos=0.45]{$0$}(3,8);
\path(0,6)--node[pos=0.45]{$0$}(1,7);
\path(1,7)--node[pos=0.45]{$0$}(2,8);
\path(0,7)--node[pos=0.45]{$0$}(1,8);
\end{tikzpicture}.$$

The following proposition, due to Watanabe \cite{watanabe}, characterizes the minimal row number of $G\notin SpT_{2n}(\gamma)$ where a symplectic violation occurs.
Our  main result Theorem \ref{th:main} chracterizes the maximal row number of $G$, as a left companion of an LR  tableau, where a symplectic violation occurs. See Examples \ref{ex:example}.
\begin{prop} \cite{watanabe} 
Let $G \in SST_{2n}(\gamma)$.
\begin{enumerate}
\item If $G\in SpT_{2n}(\gamma)$ then $\ell(\gamma)\le n$.
\item If $G$ is not symplectic, then there exists a unique
$i \in[2, 2n]$ such that
\begin{align}G(i, 1) < 2i -1 \mbox{ and } G(k, 1) \ge 2k -1  \mbox{ for all $k \in [1, i -1 ]$}.\end{align}
Moreover, we have
\begin{align}G(i - 1, 1) = 2i - 3=2(i-1)-1  \mbox{  and $G(i, 1) = 2i -2=2(i-1)$ }.\end{align}
\end{enumerate}
\end{prop}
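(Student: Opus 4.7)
The plan is to prove both parts by a direct analysis of the strictly increasing first column of $G$, using only the definitions of semistandardness and of the symplectic (King) condition.

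For part (1), I would let $\ell = \ell(\gamma)$ and apply the symplectic condition at the bottom row of the first column: $G(\ell,1) \ge 2\ell - 1$. Since the entries of $G$ belong to the alphabet $[2n]$, this forces $2\ell - 1 \le 2n$, and so $\ell \le n$ by integrality. This is essentially immediate and requires no further input.

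For part (2), my first step is to observe that $G(1,1) \ge 1 = 2\cdot 1 - 1$ holds automatically (entries are positive integers), so row $1$ never witnesses a symplectic violation. Consequently, if $G$ is not symplectic, the set of rows $k$ for which $G(k,1) < 2k-1$ is a nonempty subset of $[2,\ell(\gamma)] \subseteq [2,2n]$. Taking $i$ to be its minimum gives existence and uniqueness of the index satisfying the two displayed inequalities.

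The key step is the \emph{moreover} part, which I would obtain by a squeeze argument on the first column. By minimality of $i$, we have $G(i-1,1) \ge 2(i-1)-1 = 2i-3$, and strict increase down the first column yields
\[
G(i,1) \ge G(i-1,1) + 1 \ge 2i-2.
\]
Combined with the hypothesis $G(i,1) < 2i-1$, i.e.\ $G(i,1) \le 2i-2$, this pins down $G(i,1) = 2i-2$. Then $G(i-1,1) < G(i,1) = 2i-2$ gives $G(i-1,1) \le 2i-3$, and combined with $G(i-1,1) \ge 2i-3$ we conclude $G(i-1,1) = 2i-3$.

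There is no serious obstacle here: the argument is entirely driven by bookkeeping of the inequalities $G(k,1) \ge 2k-1$ for $k<i$ together with strict monotonicity of the first column. The only subtlety worth highlighting is that the hypothesis $G(i,1) < 2i-1$ together with the forced lower bound $G(i,1) \ge 2i-2$ leaves exactly one value available for $G(i,1)$, and propagating this rigidity one row upward simultaneously determines $G(i-1,1)$.
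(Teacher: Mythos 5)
Your proof is correct and complete: part (1) follows from $G(\ell(\gamma),1)\ge 2\ell(\gamma)-1$ together with the entry bound $2n$, and the squeeze argument $2i-2 \le G(i-1,1)+1 \le G(i,1) \le 2i-2$ for the minimal violating row $i$ correctly pins down both values in the ``moreover'' clause. The paper states this proposition as a citation to Watanabe without reproducing a proof, so there is nothing to compare against, but your argument is the standard elementary one this result admits and fills that gap correctly.
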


The GT pattern
$$\begin{tikzpicture}[x={(1cm*0.2,-1.7320508cm*0.2)},y={(1cm*0.2,1.7320508cm*0.2)}]
\path(0,0)--node[pos=0.45]{$\blue\mathbf{4}$}(1,1);
\path(1,1)--node[pos=0.45]{$\blue\mathbf{4}$}(2,2);
\path(2,2)--node[pos=0.45]{$\blue\mathbf{3}$}(3,3);
\path(3,3)--node[pos=0.45]{$\blue\mathbf{1}$}(4,4);
\path(4,4)--node[pos=0.45]{$0$}(5,5);
\path(5,5)--node[pos=0.45]{$0$}(6,6);
\path(0,1)--node[pos=0.45]{$4$}(1,2);
\path(1,2)--node[pos=0.45]{$4$}(2,3);
\path(2,3)--node[pos=0.45]{$2$}(3,4);
\path(3,4)--node[pos=0.45]{$0$}(4,5);
\path(4,5)--node[pos=0.45]{$0$}(5,6);
\path(0,2)--node[pos=0.45]{$\blue\mathbf{4}$}(1,3);
\path(1,3)--node[pos=0.45]{$\blue\mathbf{2}$}(2,4);
\path(2,4)--node[pos=0.45]{$\blue\mathbf{2}$}(3,5);
\path(3,5)--node[pos=0.45]{$0$}(4,6);
\path(0,3)--node[pos=0.45]{$3$}(1,4);
\path(1,4)--node[pos=0.45]{$2$}(2,5);
\path(2,5)--node[pos=0.45]{$0$}(3,6);
\path(0,4)--node[pos=0.45]{$\blue\mathbf{3}$}(1,5);
\path(1,5)--node[pos=0.45]{$\blue\mathbf{1}$}(2,6);
\path(0,5)--node[pos=0.45]{$\blue\mathbf{2}$}(1,6);
\end{tikzpicture}$$
with $1$-subarray $\gamma^{(6)}\supset \gamma^{(4)}
\supset  \gamma^{(2)}\supset\gamma^{(1)}$ is not symplectic because the column
$$  \YT{0.15in}{}{
 {1},
 {2},
 {4},
 {{6}},
} \in SSYT_6 \mbox{ is not symplectic.}$$ On the other hand $\ell(\gamma)>3$.
Every GT pattern can be transformed into a symplectic one by adding enough downward diagonals of zeroes on the right has in Example \ref{ex:nested}.

\section{Symplectic Littlewood-Richardson tableaux and the symplectic left companions}\label{sec:lrsymplectic}
Let $\lambda$ be a partition with $\ell(\lambda)\le 2n$.
Let $\mu,\nu \subseteq \lambda$ with $\ell(\mu)\le n$ and $\nu$ an even partition. 
\subsection{LR tableaux of even weight}
A tableau $T \in  SST_{2n}(\lambda/\mu)$ of weight $\nu$ is said to be
 Littlewood-Richardson (LR) tableau if its reverse row word is a Yamanouchi word of weight $\nu$.
Let $LR(\lambda/\mu,\nu)$ be the set of all LR tableaux of shape $\lambda/\mu$ and weight $\nu$ on the alphabet $[2n]$.

\begin{obs}Note $2n\ge \ell(\lambda)\Leftrightarrow \ell(\lambda)-n\le n$.
\end{obs}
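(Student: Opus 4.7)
The statement in question is the one-line remark
$2n\ge \ell(\lambda)\Leftrightarrow \ell(\lambda)-n\le n$,
which is a purely arithmetic equivalence between the two sides of an inequality in the integers (or reals). My plan is therefore not to develop any combinatorial machinery, but simply to record the two implications as a single manipulation.

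The approach is to observe that subtracting the constant $n$ from both sides of an inequality is an order-preserving bijection on $\mathbb{Z}$. Concretely, starting from $2n \ge \ell(\lambda)$, adding $-n$ to both sides yields $n \ge \ell(\lambda)-n$, i.e.\ $\ell(\lambda)-n \le n$; the reverse implication follows by adding $+n$ to both sides of $\ell(\lambda)-n\le n$. Both directions use nothing beyond the translation invariance of $\le$ on $\mathbb{Z}$, so no case analysis on $\ell(\lambda)$ and no hypothesis on whether $\ell(\lambda)\ge n$ is needed.

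There is no real obstacle here; the only thing worth emphasising in the write-up is \emph{why} the remark is being made at this point in the paper. The standing assumption is $\ell(\lambda)\le 2n$, and the quantity $\ell(\lambda)-n$ is the natural measure of how many rows of $\lambda$ lie strictly below row $n$ (i.e.\ how far $\lambda$ extends past the rectangle of length $n$ that controls the symplectic/Kwon side of the story). Rewriting the constraint on $\ell(\lambda)$ as $\ell(\lambda)-n\le n$ is what lets later arguments index candidate symplectic violations by an integer $t\ge 0$ with $n+t+1\le 2n$, matching the parameter $t$ appearing in the Main Theorem. So the proposal is: state the arithmetic manipulation in a single sentence, and (optionally) flag its role as the compatibility condition $0\le \ell(\lambda)-n\le n$ that will be used implicitly when we speak of rows $n+1,\dots,n+t+1$ of $T$.
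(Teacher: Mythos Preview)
Your proposal is correct and matches the paper's treatment: the remark is stated without proof in the paper, as it is an immediate arithmetic equivalence obtained by subtracting $n$ from both sides. Your additional commentary on the role of the quantity $\ell(\lambda)-n$ in indexing rows $n+1,\dots,n+t+1$ is accurate and in the spirit of how the remark is used later in the proof of the Main Theorem.
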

\begin{lem} Let $T\in LR(\lambda/\mu,\nu)$. Let $i\ge 0$.
\begin{enumerate}
\item If $T(k,j)=2i+1$ then $T(k',j')=2(i+1)$ for some $k'>k$ and $j'$.
\item If $T(k,1)=2i+1$ then $T(k+1,1)=2(i+1)$.
\item  For any $1\le k,j$ $T(k,j)\le k$.
\end{enumerate}
\end{lem}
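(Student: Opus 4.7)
The plan is to prove the three claims in the order stated, since (2) is a short consequence of (1) plus column strictness, and (3) is a standalone Yamanouchi argument. For (1), the crucial observation is that $\nu$ being even means $\nu_{2i+1}=\nu_{2(i+1)}$, so $w(T)$ contains equally many $2i+1$'s and $2(i+1)$'s. Writing $r_a(s)$ for the number of $a$'s in row $s$ of $T$, I would examine the Yamanouchi prefix at the moment immediately after reading all the $2(i+1)$'s of row $k$ but before any of the $2i+1$'s of row $k$; since row entries are weakly increasing and are read right-to-left, this split is well defined. At that moment the prefix contains $\sum_{s<k} r_{2i+1}(s)$ copies of $2i+1$ and $\sum_{s\le k} r_{2(i+1)}(s)$ copies of $2(i+1)$, and the Yamanouchi condition gives
\[
\sum_{s<k} r_{2i+1}(s)\;\ge\;\sum_{s\le k} r_{2(i+1)}(s).
\]
Adding $r_{2i+1}(k)\ge 1$ (which holds because $T(k,j)=2i+1$) to the left and comparing with the total equality $\sum_s r_{2i+1}(s)=\sum_s r_{2(i+1)}(s)$ yields $\sum_{s>k} r_{2(i+1)}(s)\ge \sum_{s>k} r_{2i+1}(s)+1\ge 1$, so some row $k'>k$ contains a $2(i+1)$.

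For (2), apply (1) to the entry $T(k,1)=2i+1$: there is a cell $(k',j')$ with $k'>k$ and $T(k',j')=2(i+1)=2i+2$. Suppose for contradiction that $T(k+1,1)\ge 2i+3$. Then the strictness of column $1$ from row $k+1$ down to row $k'$ forces $T(k',1)\ge T(k+1,1)\ge 2i+3$, while the weak monotonicity of row $k'$ gives $T(k',1)\le T(k',j')=2i+2$, a contradiction. Combined with the strict column inequality $T(k+1,1)\ge 2i+2$ that follows directly from $T(k,1)=2i+1$, one concludes $T(k+1,1)=2(i+1)$. (In particular, (1) forces $k<\ell(\lambda)$, so the cell $(k+1,1)$ is in the shape.)

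For (3), the plan is to track the intermediate partitions $\nu^{(r)}$ defined as the weight of the prefix of $w(T)$ read through the end of row $r$; each is a partition by Yamanouchi, with $\nu^{(0)}=0$ and $\nu^{(\ell(\lambda))}=\nu$. The key inductive step is $\ell(\nu^{(r+1)})\le \ell(\nu^{(r)})+1$: the rightmost (hence first read) nonzero entry of row $r+1$ is the maximal value $M_{r+1}$ appearing in that row, and for the prefix-weight with an extra box appended in row $M_{r+1}$ to remain a partition one must have $\ell(\nu^{(r)})\ge M_{r+1}-1$. Induction from $\ell(\nu^{(0)})=0$ gives $\ell(\nu^{(k)})\le k$, and since any value $a=T(k,j)$ in row $k$ satisfies $a\le \ell(\nu^{(k)})$, one obtains $a\le k$. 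The main obstacle is really in (1): choosing the right Yamanouchi prefix — the one taken just before the $2i+1$'s of row $k$ are read — so that the $+1$ coming from the hypothesis $T(k,j)=2i+1$ pushes the count of $2(i+1)$'s strictly below row $k$ beyond that of the $2i+1$'s; once (1) is set, (2) and (3) follow quickly.
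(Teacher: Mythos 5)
Your proof is correct and rests on exactly the two ingredients the paper invokes — the evenness of $\nu$ (so $\nu_{2i+1}=\nu_{2i+2}$) and the Yamanouchi property of the reverse row word — which is all the paper's own one-sentence proof says. Your write-up is simply the fully detailed version of that argument (the prefix-counting for (1), column/row monotonicity for (2), and the standard lattice-word bound $\ell(\nu^{(k)})\le k$ for (3)), and each step checks out.
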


\begin{proof}By assumption $\nu$ is an even partition and the reverse row word  of $T$ is a Yamanouchi word.
\end{proof}

\subsection{The left companion of an LR tableau}
In \cite{sathishtorres} the right companion of a Littlewood-Richardson-Sundaram tableau has been characterized by a flag condition.
Next section characterizes  the left companion by the symplectic property. We  first recall the definition of   \emph{left} \emph{Gelfand}-\emph{Tsetlin} \emph{pattern}, or  \emph{left} \emph{companion} \emph{tableu},  for short \emph{left companion}, of an LR tableau.

\begin{defi}\cite{GZ85,gzpolyed, BZphy}, \cite{akt16}\label{leftcomp} Let $T\in LR(\lambda/\mu,\nu)$. The left companion of $T$, $G_\mu(T)\in SST_{2n}(\mu)$ of  shape $\mu$ in the alphabet $[2n]$ and content $rev(\lambda-\nu)$ the reverse of $\lambda-\nu$, is  obtained from $T$ by recording the sequence of partitions $\mu^{(2n-r+1)}$ giving the
shapes occupied by the entries $<r$, including the empty entries of the shape $\mu$  identified with $0$,  in rows $r, r + 1, \dots , 2n$ of $T$, for $r = 1, 2, \dots, 2n$. We  then get the nested sequence of partitions $\mu=\mu^{(2n)}\supseteq \mu^{(2n-1)}\supseteq \cdots \supseteq\mu^{(1)}$ defining $G_\mu(T)$ as the left Gelfand-Tsetlin pattern.
\end{defi}
It follows from the definition of the left companion $G_\mu(T)$ that the $i$'s in row $i$ of $T$ do not contribute for  its construction, for $i=1,\dots,2n$. Indeed, adding the column partition  $(1^k)$, with $k$ even  to $T$  will change  $\lambda$ to $\lambda+(1^k)$ and $\nu$ to $\nu+(1^k)$ still an even partition and the weight of $G_\mu(T)$ is preserved. This is illustrated below.

\begin{ex}\label{ex:companiontableaux}
\begin{enumerate}
\item Let $n=3$, $\lambda=(4,3,2,1,1,0)$, $\nu=(1^5,0)$ and $\lambda'=\mu=(3,2,1,0^3)$ and $\nu'=(0)$,
\begin{align} T=\YT{0.15in}{}{
 {{},{},,1},
 {{},,2},
 {{},3},
 {{4}},
{5},
}, \quad  T'=\YT{0.15in}{}{
 {{},{},},
 {{},},
 {{}},
{},
},\quad G_\mu(T)=G_\mu(T')=\YT{0.15in}{}{
 {4,5,6},
 {{5},{6}},
{6},
}
\quad
\begin{tikzpicture}[x={(1cm*0.2,-1.7320508cm*0.2)},y={(1cm*0.2,1.7320508cm*0.2)}]
\path(0,0)--node[pos=0.45]{$3$}(1,1);
\path(1,1)--node[pos=0.45]{$2$}(2,2);
\path(2,2)--node[pos=0.45]{$1$}(3,3);
\path(3,3)--node[pos=0.45]{$0$}(4,4);
\path(4,4)--node[pos=0.45]{$0$}(5,5);
\path(5,5)--node[pos=0.45]{$0$}(6,6);
\path(0,1)--node[pos=0.45]{$2$}(1,2);
\path(1,2)--node[pos=0.45]{$1$}(2,3);
\path(2,3)--node[pos=0.45]{$0$}(3,4);
\path(3,4)--node[pos=0.45]{$0$}(4,5);
\path(4,5)--node[pos=0.45]{$0$}(5,6);
\path(0,2)--node[pos=0.45]{$1$}(1,3);
\path(1,3)--node[pos=0.45]{$0$}(2,4);
\path(2,4)--node[pos=0.45]{$0$}(3,5);
\path(3,5)--node[pos=0.45]{$0$}(4,6);
\path(0,3)--node[pos=0.45]{$0$}(1,4);
\path(1,4)--node[pos=0.45]{$0$}(2,5);
\path(2,5)--node[pos=0.45]{$0$}(3,6);
\path(0,4)--node[pos=0.45]{$0$}(1,5);
\path(1,5)--node[pos=0.45]{$0$}(2,6);
\path(0,5)--node[pos=0.45]{$0$}(1,6);
\end{tikzpicture}
\end{align}

\item Let $n=2$, $\lambda=(2,1^3),$ and $\lambda'=(3,1^3)$, $\mu=(1^2,0^2)$, $\nu=(1,1,0^2)$, $\nu'=(2,1,0^2)$ and $T\in LR(\lambda/\mu,\nu)$ and
$T'\in LR(\lambda'/\mu,\nu')$. One has $\lambda-\nu=\lambda'-\nu'=(1,1,0,1)$, and
\begin{align} T=\YT{0.15in}{}{
 {{},{}},
 {{}},
 {{1}},
{2},
},\quad G_\mu(T)=\YT{0.15in}{}{
 {{1},{4}},
{2},
}
\quad
\begin{tikzpicture}[x={(1cm*0.2,-1.7320508cm*0.2)},y={(1cm*0.2,1.7320508cm*0.2)}]
\path(0,0)--node[pos=0.45]{$2$}(1,1);
\path(1,1)--node[pos=0.45]{$1$}(2,2);
\path(2,2)--node[pos=0.45]{$0$}(3,3);
\path(3,3)--node[pos=0.45]{$0$}(4,4);
\path(0,1)--node[pos=0.45]{$1$}(1,2);
\path(1,2)--node[pos=0.45]{$1$}(2,3);
\path(2,3)--node[pos=0.45]{$0$}(3,4);
\path(0,2)--node[pos=0.45]{$1$}(1,3);
\path(1,3)--node[pos=0.45]{$1$}(2,4);
\path(0,3)--node[pos=0.45]{$1$}(1,4);
\end{tikzpicture};\quad
 T'=\YT{0.15in}{}{
 {{},{},1},
 {{},2},
 {{1},3},
{2,4},
},\quad G_\mu(T')=\YT{0.15in}{}{
 {{1},{4}},
{2},
}
\quad
\begin{tikzpicture}[x={(1cm*0.2,-1.7320508cm*0.2)},y={(1cm*0.2,1.7320508cm*0.2)}]
\path(0,0)--node[pos=0.45]{$2$}(1,1);
\path(1,1)--node[pos=0.45]{$1$}(2,2);
\path(2,2)--node[pos=0.45]{$0$}(3,3);
\path(3,3)--node[pos=0.45]{$0$}(4,4);
\path(0,1)--node[pos=0.45]{$1$}(1,2);
\path(1,2)--node[pos=0.45]{$1$}(2,3);
\path(2,3)--node[pos=0.45]{$0$}(3,4);
\path(0,2)--node[pos=0.45]{$1$}(1,3);
\path(1,3)--node[pos=0.45]{$1$}(2,4);
\path(0,3)--node[pos=0.45]{$1$}(1,4);
\end{tikzpicture}
\end{align}

\item Let $n=4$, $\lambda=(6,5,5,4,3,1,0^2)$ even, $\nu=(4,4,3,3,0^4) $, $\mu=(4,3,2,1,0^4)$  and $T\in  LR(\lambda/\mu,\nu)$ as below. We illustrate $T$ with its left companion $G_\mu(T)=G$
\begin{align} &T=\YT{0.15in}{}{
 {{},{},{},{},{1},{1}},
 {{},{},{},{1},{2}},
 {{},{},{1},{2},{3}},
 {{},{2},{3},{4}},
 {{2},{3},{4}},
{4}},&\quad
G_\mu(T)=  \YT{0.15in}{}{
 {3,4,{4},{6}},
 {4,{5},{7}},
 {6,{8}},
 {{8}},
}&
\quad
\begin{tikzpicture}[x={(1cm*0.2,-1.7320508cm*0.2)},y={(1cm*0.2,1.7320508cm*0.2)}]
\path(0,0)--node[pos=0.45]{$4$}(1,1);
\path(1,1)--node[pos=0.45]{$3$}(2,2);
\path(2,2)--node[pos=0.45]{$2$}(3,3);
\path(3,3)--node[pos=0.45]{$1$}(4,4);
\path(4,4)--node[pos=0.45]{$0$}(5,5);
\path(5,5)--node[pos=0.45]{$0$}(6,6);
\path(6,6)--node[pos=0.45]{$0$}(7,7);
\path(7,7)--node[pos=0.45]{$0$}(8,8);
\path(0,1)--node[pos=0.45]{$4$}(1,2);
\path(1,2)--node[pos=0.45]{$3$}(2,3);
\path(2,3)--node[pos=0.45]{$1$}(3,4);
\path(3,4)--node[pos=0.45]{$0$}(4,5);
\path(4,5)--node[pos=0.45]{$0$}(5,6);
\path(5,6)--node[pos=0.45]{$0$}(6,7);
\path(6,7)--node[pos=0.45]{$0$}(7,8);
\path(0,2)--node[pos=0.45]{$4$}(1,3);
\path(1,3)--node[pos=0.45]{$2$}(2,4);
\path(2,4)--node[pos=0.45]{$1$}(3,5);
\path(3,5)--node[pos=0.45]{$0$}(4,6);
\path(4,6)--node[pos=0.45]{$0$}(5,7);
\path(5,7)--node[pos=0.45]{$0$}(6,8);
\path(0,3)--node[pos=0.45]{$3$}(1,4);
\path(1,4)--node[pos=0.45]{$2$}(2,5);
\path(2,5)--node[pos=0.45]{$0$}(3,6);
\path(3,6)--node[pos=0.45]{$0$}(4,7);
\path(4,7)--node[pos=0.45]{$0$}(5,8);
\path(0,4)--node[pos=0.45]{$3$}(1,5);
\path(1,5)--node[pos=0.45]{$1$}(2,6);
\path(2,6)--node[pos=0.45]{$0$}(3,7);
\path(3,7)--node[pos=0.45]{$0$}(4,8);
\path(0,5)--node[pos=0.45]{$1$}(1,6);
\path(1,6)--node[pos=0.45]{$0$}(2,7);
\path(2,7)--node[pos=0.45]{$0$}(3,8);
\path(0,6)--node[pos=0.45]{$0$}(1,7);
\path(1,7)--node[pos=0.45]{$0$}(2,8);
\path(0,7)--node[pos=0.45]{$0$}(1,8);
\end{tikzpicture}
\nonumber
\end{align}
$G_\mu(T)$ has weight $rev(\lambda-\nu)=(0^2,1,3,1,2,1,2)$ and as the left Gelfand-Tsetlin pattern of $T$ is defined by the nested sequence of partitions constructed as in Definition \ref{leftcomp},

\begin{align}
&\mu=\mu^{(8)}=(4,3,2,1,0^4)\supseteq \mu^{(8-1)}=(4,3,1,0^4)\supseteq \mu^{(8-2)}=(4,2,1,0^3)\supseteq\mu^{(8-3)}=(3,2,0^3)\nonumber \\
&\supseteq \mu^{(8-4)}=(3,1,0^2)
\supseteq \mu^{(8-5)}=(1,0^2)\supseteq \mu^{(8-6)}=(0^2)=\mu^{(8-7)}=(0).\label{mainnested}
\end{align}
\noindent which gives  the fourth column  of $G_\mu(T)$.
\end{enumerate}
\end{ex}

As the $i$'s in the row $i$ of an LR tableau $T$ do not contribute for  the construction of $G_\mu(T)$, every GT pattern of type $\mu$ and weight $\alpha$ specify, as left companion,  an  LR tableau $T\in LR(\lambda/\mu,\nu)$ where $rev(\lambda-\nu)=\alpha$ where $rev$ means reverse. More precisely, from   the Berenstein-Zelevinky LR rule, Theorem ~4.3 in~\cite{BZphy} (see also \cite{akt16}), the set of left companions of $LR(\lambda/\mu,\nu)$, denoted
 ${}^- LR^\lambda_{\mu,\nu}$,  are those tableaux or GT patterns $G$ of type $\mu$ and content $ rev(\lambda/\nu)$, where the defining nested sequence of partitions $\mu=\mu^{(2n)}\supseteq \mu^{(2n-1)}\supseteq \cdots \supseteq\mu^{(1)}$
  content $ rev(\lambda/\nu)$  satisfies certain linear inequalities pertaining the partition $\nu$. (See also \cite[Section 2.4.2]{az18v5} for a crystal theoretical characterization.)

\begin{ex}\label{ex:sympviolation}
Let $n=5$, and \begin{align*}G= \YT{0.15in}{}{
 {1},
 {\red\mathbf{2}},
 {\red\mathbf{3}},
 {\red\mathbf{4}},
 {\red\mathbf{8}},
}\in SST_{10}(1^5,0^5)\setminus SpT_{10}(1^5,0^5)\end{align*} (symplectic failing in red),  with GT pattern of type $\mu=(1^5,0^5)$ and weight $\alpha=(1^4,0^3,1,0^2)$ depicted below on the left hand side of \eqref{n=five},
\begin{align}&\label{n=five} \begin{tikzpicture}[x={(1cm*0.2,-1.7320508cm*0.2)},y={(1cm*0.2,1.7320508cm*0.2)}]
\path(0,0)--node[pos=0.45]{$1$}(1,1);
\path(1,1)--node[pos=0.45]{$1$}(2,2);
\path(2,2)--node[pos=0.45]{$1$}(3,3);
\path(3,3)--node[pos=0.45]{$1$}(4,4);
\path(4,4)--node[pos=0.45]{$1$}(5,5);
\path(5,5)--node[pos=0.45]{$0$}(6,6);
\path(6,6)--node[pos=0.45]{$0$}(7,7);
\path(7,7)--node[pos=0.45]{$0$}(8,8);
\path(8,8)--node[pos=0.45]{$0$}(9,9);
\path(9,9)--node[pos=0.45]{$0$}(10,10);
\path(0,1)--node[pos=0.45]{$1$}(1,2);
\path(1,2)--node[pos=0.45]{$1$}(2,3);
\path(2,3)--node[pos=0.45]{$1$}(3,4);
\path(3,4)--node[pos=0.45]{$1$}(4,5);
\path(4,5)--node[pos=0.45]{$1$}(5,6);
\path(5,6)--node[pos=0.45]{$0$}(6,7);
\path(6,7)--node[pos=0.45]{$0$}(7,8);
\path(7,8)--node[pos=0.45]{$0$}(8,9);
\path(8,9)--node[pos=0.45]{$0$}(9,10);
\path(0,2)--node[pos=0.45]{$\blue\mathbf{1}$}(1,3);
\path(1,3)--node[pos=0.45]{$\blue\mathbf{1}$}(2,4);
\path(2,4)--node[pos=0.45]{$\blue\mathbf{1}$}(3,5);
\path(3,5)--node[pos=0.45]{$\blue\mathbf{1}$}(4,6);
\path(4,6)--node[pos=0.45]{$\blue\mathbf{1}$}(5,7);
\path(5,7)--node[pos=0.45]{$0$}(6,8);
\path(6,8)--node[pos=0.45]{$0$}(7,9);
\path(7,9)--node[pos=0.45]{$0$}(8,10);
\path(0,3)--node[pos=0.45]{$1$}(1,4);
\path(1,4)--node[pos=0.45]{$1$}(2,5);
\path(2,5)--node[pos=0.45]{$1$}(3,6);
\path(3,6)--node[pos=0.45]{$1$}(4,7);
\path(4,7)--node[pos=0.45]{$0$}(5,8);
\path(5,8)--node[pos=0.45]{$0$}(6,9);
\path(6,9)--node[pos=0.45]{$0$}(7,10);
\path(0,4)--node[pos=0.45]{$1$}(1,5);
\path(1,5)--node[pos=0.45]{$1$}(2,6);
\path(2,6)--node[pos=0.45]{$1$}(3,7);
\path(3,7)--node[pos=0.45]{$1$}(4,8);
\path(4,8)--node[pos=0.45]{$0$}(5,9);
\path(5,9)--node[pos=0.45]{$0$}(6,10);
\path(0,5)--node[pos=0.45]{$1$}(1,6);
\path(1,6)--node[pos=0.45]{$1$}(2,7);
\path(2,7)--node[pos=0.45]{$1$}(3,8);
\path(3,8)--node[pos=0.45]{$1$}(4,9);
\path(4,9)--node[pos=0.45]{$0$}(5,10);
\path(0,6)--node[pos=0.45]{$\blue\mathbf{1}$}(1,7);
\path(1,7)--node[pos=0.45]{$\blue\mathbf{1}$}(2,8);
\path(2,8)--node[pos=0.45]{$\blue\mathbf{1}$}(3,9);
\path(3,9)--node[pos=0.45]{$\blue\mathbf{1}$}(4,10);
\path(0,7)--node[pos=0.45]{$\blue\mathbf{1}$}(1,8);
\path(1,8)--node[pos=0.45]{$\blue\mathbf{1}$}(2,9);
\path(2,9)--node[pos=0.45]{$\blue\mathbf{1}$}(3,10);
\path(0,8)--node[pos=0.45]{$\blue\mathbf{1}$}(1,9);
\path(1,9)--node[pos=0.45]{$\blue\mathbf{1}$}(2,10);
\path(0,9)--node[pos=0.45]{$\blue\mathbf{1}$}(1,10);
\end{tikzpicture}\quad\quad \begin{tikzpicture}[x={(1cm*0.2,-1.7320508cm*0.2)},y={(1cm*0.2,1.7320508cm*0.2)}]
\path(0,0)--node[pos=0.45]{$1$}(1,1);
\path(1,1)--node[pos=0.45]{$1$}(2,2);
\path(2,2)--node[pos=0.45]{$1$}(3,3);
\path(3,3)--node[pos=0.45]{$1$}(4,4);
\path(4,4)--node[pos=0.45]{$1$}(5,5);
\path(5,5)--node[pos=0.45]{$0$}(6,6);
\path(6,6)--node[pos=0.45]{$0$}(7,7);
\path(7,7)--node[pos=0.45]{$0$}(8,8);
\path(8,8)--node[pos=0.45]{$0$}(9,9);
\path(9,9)--node[pos=0.45]{$0$}(10,10);
\path(10,10)--node[pos=0.45]{$0$}(11,11);
\path(11,11)--node[pos=0.45]{$0$}(12,12);
\path(0,1)--node[pos=0.45]{$1$}(1,2);
\path(1,2)--node[pos=0.45]{$1$}(2,3);
\path(2,3)--node[pos=0.45]{$1$}(3,4);
\path(3,4)--node[pos=0.45]{$1$}(4,5);
\path(4,5)--node[pos=0.45]{$1$}(5,6);
\path(5,6)--node[pos=0.45]{$0$}(6,7);
\path(6,7)--node[pos=0.45]{$0$}(7,8);
\path(7,8)--node[pos=0.45]{$0$}(8,9);
\path(8,9)--node[pos=0.45]{$0$}(9,10);
\path(9,10)--node[pos=0.45]{$0$}(10,11);
\path(10,11)--node[pos=0.45]{$0$}(11,12);
\path(0,2)--node[pos=0.45]{$\blue\mathbf{1}$}(1,3);
\path(1,3)--node[pos=0.45]{$\blue\mathbf{1}$}(2,4);
\path(2,4)--node[pos=0.45]{$\blue\mathbf{1}$}(3,5);
\path(3,5)--node[pos=0.45]{$\blue\mathbf{1}$}(4,6);
\path(4,6)--node[pos=0.45]{$\blue\mathbf{1}$}(5,7);
\path(5,7)--node[pos=0.45]{$0$}(6,8);
\path(6,8)--node[pos=0.45]{$0$}(7,9);
\path(7,9)--node[pos=0.45]{$0$}(8,10);
\path(8,10)--node[pos=0.45]{$0$}(9,11);
\path(9,11)--node[pos=0.45]{$0$}(10,12);
\path(0,3)--node[pos=0.45]{$1$}(1,4);
\path(1,4)--node[pos=0.45]{$1$}(2,5);
\path(2,5)--node[pos=0.45]{$1$}(3,6);
\path(3,6)--node[pos=0.45]{$1$}(4,7);
\path(4,7)--node[pos=0.45]{$0$}(5,8);
\path(5,8)--node[pos=0.45]{$0$}(6,9);
\path(6,9)--node[pos=0.45]{$0$}(7,10);
\path(7,10)--node[pos=0.45]{$0$}(8,11);
\path(8,11)--node[pos=0.45]{$0$}(9,12);
\path(0,4)--node[pos=0.45]{$1$}(1,5);
\path(1,5)--node[pos=0.45]{$1$}(2,6);
\path(2,6)--node[pos=0.45]{$1$}(3,7);
\path(3,7)--node[pos=0.45]{$1$}(4,8);
\path(4,8)--node[pos=0.45]{$0$}(5,9);
\path(5,9)--node[pos=0.45]{$0$}(6,10);
\path(6,10)--node[pos=0.45]{$0$}(7,11);
\path(7,11)--node[pos=0.45]{$0$}(8,12);
\path(0,5)--node[pos=0.45]{$1$}(1,6);
\path(1,6)--node[pos=0.45]{$1$}(2,7);
\path(2,7)--node[pos=0.45]{$1$}(3,8);
\path(3,8)--node[pos=0.45]{$1$}(4,9);
\path(4,9)--node[pos=0.45]{$0$}(5,10);
\path(5,10)--node[pos=0.45]{${0}$}(6,11);
\path(6,11)--node[pos=0.45]{${0}$}(7,12);
\path(0,6)--node[pos=0.45]{$\blue\mathbf{1}$}(1,7);
\path(1,7)--node[pos=0.45]{$\blue\mathbf{1}$}(2,8);
\path(2,8)--node[pos=0.45]{$\blue\mathbf{1}$}(3,9);
\path(3,9)--node[pos=0.45]{$\blue\mathbf{1}$}(4,10);
\path(4,10)--node[pos=0.45]{${0}$}(5,11);
\path(5,11)--node[pos=0.45]{${0}$}(6,12);
\path(0,7)--node[pos=0.45]{$\blue\mathbf{1}$}(1,8);
\path(1,8)--node[pos=0.45]{$\blue\mathbf{1}$}(2,9);
\path(2,9)--node[pos=0.45]{$\blue\mathbf{1}$}(3,10);
\path(3,10)--node[pos=0.45]{${0}$}(4,11);
\path(4,11)--node[pos=0.45]{${0}$}(5,12);
\path(0,8)--node[pos=0.45]{$\blue\mathbf{1}$}(1,9);
\path(1,9)--node[pos=0.45]{$\blue\mathbf{1}$}(2,10);
\path(2,10)--node[pos=0.45]{${0}$}(3,11);
\path(3,11)--node[pos=0.45]{${0}$}(4,12);
\path(0,9)--node[pos=0.45]{$\blue\mathbf{1}$}(1,10);
\path(1,10)--node[pos=0.45]{${0}$}(2,11);
\path(2,11)--node[pos=0.45]{${0}$}(3,12);
\path(0,10)--node[pos=0.45]{${0}$}(1,11);
\path(1,11)--node[pos=0.45]{${0}$}(2,12);
\path(0,11)--node[pos=0.45]{${0}$}(1,12);
\end{tikzpicture}
\end{align}
Let $n=6$, and  add two SE downward diagonals of zeroes  to  the GT pattern on the left hand side of \eqref{n=five} to get the right hand one, then $G$ becomes (not yet symplectic), \begin{align*}G= \YT{0.15in}{}{
 {3},
 {{4}},
 {5},
 {\red\mathbf{6}},
 {{10}},
}\in SST_{12}(1^5,0^7)\setminus SpT_{12}(1^5,0^7) \mbox{ the left companion of $T$ for $n=6$.}
\end{align*}

 Let $n=7$, and  add two more SE downward diagonals of zeroes  to the GT pattern on right hand side  \eqref{n=five}, then $G$ becomes symplectic \begin{align*}G= \YT{0.15in}{}{
 {5},
 {{6}},
 {7},
 {{8}},
 {{12}},
}\in  SpT_{14}(1^5,0^9) \mbox{ the left companion tableau of $T$ for $n=7$.}
\end{align*}

They  specify,  as left companions, respectively for $n=5, 6,7$,
\begin{align}
 T=\YT{0.15in}{}{
 {{},{1}},
 {{},{2}},
 {{},{3}},
 {{}},
  {{}},
 {{1}},
{{2}},
{{4}},
{{5}},
{{6}},
} \in LR(\lambda/\mu,\nu),\; \lambda=(2^3,1^7,0^{2n-10}),\; \nu=(2,2,1^4,0^{2n-6}).
\end{align}
\end{ex}

The next proposition collects a few properties of $G_\mu(T)$ as a left companion of $T$.

\begin{prop} Let $T\in LR(\lambda/\mu,\nu)$ and the left companion of $G_\mu(T)$ defined by \eqref{nestedleft}. The following holds
\begin{enumerate}
\item $T(k,j)$ is not defined or  $T(k,j)\le k$, for any $1\le k,j$. In particular, $T(\ell(\mu)+s,1)\in [1,\ell(\mu)+s]$, for $s\ge 1$.
\item   $T(\ell(\mu)+s,1)=\ell(\mu)+s$, for some  $s\ge 1$, or not defined if and only if
\begin{align}\label{zero}\mu^{(2n-(\ell(\mu)+t)+1)}=(0) \mbox{ for all $t\ge s$}.
\end{align}
In this case, $G_\mu(T)=G_\mu(T')$ where $T'$ is obtained from $T$ by erasing the  $i$'s in each row $i= \ell(\mu)+t$ for $t\ge s$.

\item Either $T(k,1)$  is not defined for $k\ge 1$, in which case $\ell(\lambda)=\ell(\mu)$, or $T(k,1)= k$ for $\ell (\mu)<k\le \ell (\lambda)\le 2n$ if and only if
\begin{align}\label{prop:sequencestrictlength}\ell(\mu^{(2n)})>\ell(\mu^{(2n-1)})>\cdots>
  \ell(\mu^{(2n-\ell(\mu)+1)})>\ell(\mu^{(2n-\ell(\mu))})=\cdots=\ell(\mu^{(1)})=0.
   \end{align}
   In this case, $G_\mu(T)=G_\mu(T')$ where $T'$ is obtained from $T$ by erasing the  $i$'s in each row $ \ell(\mu)< i\le \ell(\lambda)$.
   \item $T(k,\mu_k+1)= k$ for $1\le k\le \ell(\lambda)$ if and only if
   $G_\mu(T)=G_\mu(D(\mu))$ is  the evacuation  \cite{stanley,fulton} of the Yamanouchi tableau of shape $\mu$ on the alphabet $[2n]$. In particular, the first column is $2n>2n-1>\cdots,2n-\ell(\mu)+1$,
\begin{align}\begin{array}{cccccccccccccccccc}
&&&&&&&0\cr
&&&&&&\cdots&&\cdots&&\cr
&&&&&0&&\cdots&&0\cr
&&&&\mu_{\ell(\mu)}&&0&&\cdots&&0\cr
&&&\mu_{\ell(\mu)-1}&&\mu_{\ell(\mu)}&&0&&\cdots&&0\cr
&&\cdots&&\cdots&&\cdots&&0&&\cdots&&0\cr
&\mu_2&&\mu_3&&\cdots&&\mu_{\ell(\mu)}&&0&&\cdots&&0\cr
\mu_1&&\mu_2&&\cdots&&\mu_{\ell(\mu)-1}&&\mu_{\ell(\mu)}&&0&&\cdots&&0\cr
\end{array}
\end{align}

   \item Let $T(\ell(\mu)+1,1)=s\in [\ell(\mu)]$. Then either $T(\ell(\mu)+1,1)= s$ even, or  $s$ odd and for some even $2\le t\le \ell(\mu)-s+2$, one has $T(\ell(\mu)+i,1)=s+i-1$,  $ 1\le i\le t$. Moreover, this is equivalent to

       \begin{align}&\ell(\mu^{(2n)})>\ell(\mu^{(2n-1)})>\cdots>
  \ell(\mu^{(2n-s+1)})=\ell(\mu^{(2n-s)}), \mbox{ if  $s$ even},\\
  &\mbox{ and,  for some even $2\le t\le \ell(\mu)+1$, } \nonumber\\
  &  \ell(\mu^{(2n)})>\ell(\mu^{(2n-1)})>\cdots>
  \ell(\mu^{(2n-s+1)})=\ell(\mu^{(2n-s)})=\cdots=\ell(\mu^{(2n-(s+t-1))}), \mbox{ if  $s$ odd}.
  \end{align}

\end{enumerate}
\end{prop}
\begin{proof}$(1)$
Recall since $T$ is LR,  above or in a row $k$ of  $T$ there are no larger entries  than $k$, that is,  $T(k,j)$ is not defined or  $T(k,j)\le k$, for any $1\le k,j$. In particular, $T(k,1)\le k$ or not defined for $k>\ell(\mu)$.

$(2)$ The partition $\mu^{(2n-(\ell(\mu)+t)+1)}$ gives the
shape occupied by the entries $<\ell(\mu)+t$, including the empty entries identified with $0$,  in rows $\ell(\mu)+t, \ell(\mu)+t + 1, \dots , 2n$ of $T$. Since $T$ is LR and  $T(\ell(\mu)+s,1)=\ell(\mu)+s< T(\ell(\mu)+t,1)$, it forces
$T(\ell(\mu)+t,1)=\ell(\mu)+t\le T(\ell(\mu)+t,j) $, for $t\ge s$ and $j\ge 1$. Therefore, there are no entries $<\ell(\mu)+t$ in rows $\ell(\mu)+t, \ell(\mu)+t + 1, \dots , 2n$ of $T$, and $\ell(\mu^{(2n-(\ell(\mu)+t)+1)})=0$  for all $t\ge s$. In conclusion the $i$'s in row $i$ of $T$ do not contribute for $G_\mu(T)$.

$(3)$ It is a consequence of $(2)$. For $r= \ell(\mu)+1,\dots, 2n$,
the  partition $\mu^{(2n-r+1)}$ giving the
shape occupied by the entries $<r$, including the empty entries of the shape $\mu$  identified with $0$,  in rows $r, r + 1, \dots , 2n$ of $T$, is empty.

For $1\le r\le \ell(\mu)$, the  partition $\mu^{(2n-r+1)}$ giving the
shape occupied by the entries $<r$, including the empty entries of the shape $\mu$  identified with $0$,  in rows $r, r + 1, \dots , 2n$ of $T$, has length $\ell(\mu)-r+1$.

We  then get the nested sequence of partitions defining $G_\mu(T)$ to be

$$\mu=\mu^{(2n)}\supset \mu^{(2n-1)}\supset \cdots \supset\mu^{(2n-\ell(\mu)+1)}\supset \mu^{(2n-\ell(\mu))}=\cdots=\mu^{(1)}=\emptyset,$$
and the result follows by definition of $G_\mu(T)$.

$(4)$ The $i$'s in row $i$ of $T$ do not contribute for $G_\mu(T)$.

$(5)$ Note $T(\ell(\mu)+i,1)=s+i-1\le \ell(\mu)+i-1<\ell(\mu)+i$, for $  i\ge 1$.

Let $T(\ell(\mu)+1,1)= s \mbox{ even }\in [\ell(\mu)]$. Then, for $1\le r\le s\le \ell(\mu)$, the partition $\mu^{(2n-r+1)}$ gives the
shape occupied by the entries $<r$, including the empty entries of the shape $\mu$  identified with $0$,  in rows $r, r + 1, \dots , \ell(\mu)$ of $T$, and $\ell(\mu^{(2n-r+1)})=\ell(\mu)-r+1$. In particular, $\mu^{(2n-s+1)}$ gives the shape occupied by the entires $<s$ in rows $s,  \dots , \ell(\mu)$, and $\mu^{(2n-s)}$ gives the shape occupied by the entires $\le s$ in rows $s+1,  \dots , \ell(\mu), \ell(\mu)+1$.
This means, $\ell(\mu^{(2n-s+1)})=\ell(\mu)-s+1$ and
$$\ell(\mu^{(2n-s)})=\ell(\mu^{(2n-s+1)})-1+1=\ell(\mu)-s+1=\ell(\mu^{(2n-s+1)}).$$

Let $T(\ell(\mu)+1,1)= s \mbox{ odd }\in [\ell(\mu)]$. Since the partition $\nu$ is even, then  one also has $T(\ell(\mu)+2,1)= s+1$.
 Assume, for some even $2\le t\le \ell(\mu)+1$, $T(\ell(\mu)+i,1)=s+i-1$,  $ 1\le i\le t$.

 For $s=1$, one has for some even $2\le t\le \ell(\mu)+1$, $T(\ell(\mu)+i,1)=i$,  $ 1\le i\le t$. We show that this is equivalent to
 \begin{align}\ell(\mu)=\ell(\mu^{(2n)})=\ell(\mu^{(2n-1)})=\cdots=\ell(\mu^{(2n-t)}
  \end{align}

  The partition $\mu^{(2n-r+1)}$ gives the
shape occupied by the entries $<r$, including the empty entries of the shape $\mu$  identified with $0$,  in rows $r, r + 1, \dots , 2n$ of $T$, for $r = 1, 2, \dots, 2n$

For $i = 1, 2, \dots, t\le \ell(\mu)+1$ with $t$ even, since $T(\ell(\mu)+i,1)=i$,  $ 1\le i\le t$, the partition $\mu^{(2n-i+1)}$ gives the
shape occupied by the entries $<i$, including the empty entries of the shape $\mu$  identified with $0$,  in rows $i, i + 1, \dots ,\ell(\mu),\ell(\mu)+1,\dots, \ell(\mu)+i-1$ of $T$. Therefore, $\ell(\mu^{(2n-i+1)})=\ell(\mu)$,for $i=1,\dots t$.

Let $3\le s\le \ell(\mu)$ with $s$ odd. Since for some even $2\le t\le \ell(\mu)-s+2$, $T(\ell(\mu)+i,1)=s+i-1$,  $ 1\le i\le t$, then indeed
$$\ell(\mu^{(2n)})>\ell(\mu^{(2n-1)})>\cdots>
  \ell(\mu^{(2n-s+1)})=\ell(\mu)-s+1=\ell(\mu^{(2n-s)})=\cdots=\ell(\mu^{(2n-(s+t-1))})
  $$

\end{proof}

\subsection{The symplectic left Gelfand-Tsetlin pattern of a symplectic LR tableau}
Let ${}^-LRS^\lambda_{\mu,\nu}$ denote the set left companions or left Gelfand-Tsetlin patterns of the LR-Sundaram tableaux $LRS(\lambda|\mu,\nu)$ as in Definition \ref{def:lrs}.
We show that the left companions or left Gelfand-Tsetlin patterns of LR-Sundaram tableaux restrict to those satisfying the symplectic condition, Definition \ref{def:symp} and Definition \ref{def:sympgt},

\begin{align}{}^-LR^\lambda_{\mu,\nu}\cap SpT(\mu,n)={}^-LRS^\lambda_{\mu,\nu}.
\end{align}

 We do this by characterizing $LR(\lambda/\mu,\nu)\setminus LRS(\lambda/\mu,\nu)$ which is equivalent to ${}^-LR^\lambda_{\mu,\nu}\setminus {}^-LRS^\lambda_{\mu,\nu}$ by describing, for a given $T\in  LR(\lambda/\mu,\nu)\setminus LRS(\lambda/\mu,\nu)$ the topmost failing row location respectively bottom most row location for the corresponding left Gelfand-Tsetlin pattern $G_\mu(T)$.

We now recall the definition of LR-Sundaram tableau also called symplectic LR tableau in \cite{watanabe}.

\begin{defi}\cite{sundaram,sundaram90} \label{def:lrs}Let $n\in \mathbb{N}$ be given.  Let $\mu,\nu\subseteq \lambda$ such that $\ell(\mu)\le n$ and $\nu$  an even partition. A Littlewood–Richardson tableau $T$ of shape $\lambda/\mu$ and weight $\nu$ on the alphabet $[2n]$ satisfies the
\emph{Sundaram property} if for each $i = 0,\dots , \ell(\nu)/2 $, the  odd entry $2i +1$ appears in row $n+i$
or above in the Young diagram of $\lambda$. In other words, if $T(k, j) = 2i + 1$ for some cell $(k,j)$ of $T$ and $i\in\mathbb{Z}_\ge 0$, then we have $k \le n + i$.

The set of $T \in  LR(\lambda/\mu, \nu)$ satisfying
the Sundaram property is denoted by $LRS(\lambda/\mu, \nu)$ and called the set of
\emph{  LR-Sundaram tableaux} or \emph{symplectic  LR tableaux} in \cite{watanabe}.
\end{defi}

\begin{obs} Let $T\in LR(\lambda/\mu, \nu)$.
\begin{enumerate}
\item A Sundaram property violation never occurs in the first $n$ rows of $T$.

\item If $T\in LRS(\lambda/\mu, \nu)$, the possible odd numbers  in row $n+t$ of $T$ are  larger or equal than $2t+1$, for $t\ge 0$. In other words, for $t\ge 1$, the  possible odd numbers in row $n+t$ of $T$ are $2i+1$ with $t\le i$: $1$ or larger in row $n$; $3$ or larger in row $n+1$, $5$ or larger in row $n+2$, etc.


\item For $n\ge \ell(\lambda)$ any $T\in LR(\lambda/\mu, \nu)$ is LR-Sundaram.
\end{enumerate}
\end{obs}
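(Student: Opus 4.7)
The three assertions of the Remark are all immediate consequences of the single defining inequality of the Sundaram property, namely that whenever $T(k,j) = 2i+1$ for some cell $(k,j)$ and some $i \ge 0$, one must have $k \le n+i$. So the plan is simply to instantiate that inequality in three different ways; no use of the Littlewood--Richardson condition, the Yamanouchi property, or the evenness of $\nu$ is required.

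For item (1), I would argue by contraposition: a Sundaram violation in a cell $(k,j)$ means $T(k,j) = 2i+1$ with $k > n+i$, so $k \ge n+i+1 \ge n+1$, showing that no row with index at most $n$ can host a violation no matter which odd value sits there. For item (2), assuming $T \in LRS(\lambda/\mu,\nu)$ and $T(n+t,j) = 2i+1$, the Sundaram inequality applied with $k = n+t$ yields $n+t \le n+i$, hence $i \ge t$ and $2i+1 \ge 2t+1$; I would read the claim as the universally quantified implication ``every odd entry in row $n+t$ is at least $2t+1$'', so it is vacuous in rows that contain no odd entry. For item (3), note that when $n \ge \ell(\lambda)$ every cell $(k,j) \in Y(\lambda)$ satisfies $k \le \ell(\lambda) \le n \le n+i$ for every $i \ge 0$, so the Sundaram inequality is automatically met by every odd entry $2i+1$, whence any $T \in LR(\lambda/\mu,\nu)$ is LR--Sundaram.

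The only mild obstacle is essentially notational: correctly handling the quantifier on $i$ in item (2) by formulating it as a conditional on the existence of an odd entry in the row, and recognizing that the three statements are really the same inequality read in three directions (lower bound on $k$ given the entry, lower bound on the entry given $k$, and triviality when $k$ is globally small). This is precisely why the author groups the three claims in a single Remark rather than stating them as a Proposition, and it suggests they can be treated in a few lines without invoking any structural property of LR tableaux beyond the definition.
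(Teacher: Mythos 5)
Your proof is correct and matches the paper's (implicit) treatment: the Remark is stated without proof precisely because all three items follow by directly instantiating the defining inequality $k \le n+i$ of the Sundaram property, exactly as you do. Your observation that neither the Yamanouchi condition nor the evenness of $\nu$ is needed is accurate, and your handling of the quantifier in item (2) as a vacuously-true-when-no-odd-entry implication is the right reading.
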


\begin{lem}\label{lem:firstcolumn} Let $T\in LR(\lambda/\mu,\nu)$ with $\nu$ even, on the alphabet $[2n]$, and $n\ge \ell(\mu)$.
\begin{enumerate}
\item If  $T$ satisfies the Sundaram property, it holds
\begin{align}T(n+t,1)=2i+1, &\mbox{ for some $i\ge 0$ and $t\ge 1$ }\Rightarrow i\ge t\ge 1. \label{firstcolumn}\end{align}

\item If $T$ satisfies \eqref{firstcolumn},  $T(n+t,1)\ge 2t$, for all $t\ge 1$.
\end{enumerate}
\end{lem}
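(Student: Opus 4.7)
The plan is to handle the two parts separately. Part (1) is essentially a direct unpacking of the Sundaram property. Part (2) will go by a short induction on $t$, combining the strict column monotonicity of $T$ with the hypothesis \eqref{firstcolumn}.

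For part (1), the Sundaram property guarantees that an odd entry $2i+1$ occurring in cell $(k,j)$ of $T$ satisfies $k \le n+i$. Applying this with $k = n+t$ and $j = 1$ gives $n+t \le n+i$, i.e.\ $i \ge t$; the assumption $t \ge 1$ then yields $i \ge t \ge 1$.

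For part (2), I would argue by induction on $t \ge 1$, under the standing assumption that $(n+t,1) \in Y(\lambda)$ so that $T(n+t,1)$ is actually an entry (otherwise the inequality is vacuous). For the base case $t=1$, the cell $(n+1,1)$ is outside $Y(\mu)$ because $\ell(\mu) \le n$, so $T(n+1,1) \ge 1$. If $T(n+1,1)$ equalled $1 = 2\cdot 0 + 1$, condition \eqref{firstcolumn} applied with $t=1$ and $i=0$ would force $0 \ge 1$, a contradiction; hence $T(n+1,1) \ge 2$. For the inductive step, suppose $T(n+t-1,1) \ge 2(t-1)$. Since the cells $(n+t-1,1)$ and $(n+t,1)$ both lie in $Y(\lambda)\setminus Y(\mu)$, the semistandard column-strict condition gives $T(n+t,1) \ge T(n+t-1,1) + 1 \ge 2t - 1$. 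If equality held, then $T(n+t,1) = 2t-1 = 2(t-1)+1$ would be odd with $i = t-1$, and \eqref{firstcolumn} would demand $t-1 \ge t$, again impossible. Therefore $T(n+t,1) \ge 2t$, completing the induction.

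I do not anticipate any substantive obstacle here: both parts reduce to unwinding the definitions. The only mildly delicate point is the base case of the induction, where one must invoke $\ell(\mu) \le n$ to ensure that $T(n+1,1)$ is a genuine entry of the skew tableau, and then observe that the single forbidden value for $T(n+t,1)$ strictly below $2t$ — namely the odd number $2t-1$ — is precisely the one excluded by \eqref{firstcolumn}.
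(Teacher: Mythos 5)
Your proof is correct and takes essentially the same route as the paper: part (1) is just the Sundaram condition specialized to the first column, and part (2) is the same induction on $t$ using column-strictness together with \eqref{firstcolumn}. The paper's case split on the parity of the next first-column entry is equivalent to your observation that the only admissible value below $2t$ is the odd number $2t-1$, which \eqref{firstcolumn} excludes.
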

\begin{proof} $1)$  It is a consequence of the definition of LRS tableau with $j=1$. If $T\in LRS(\lambda/\mu,\nu)$ then, in particular, for $j=1$, \eqref{firstcolumn} holds.

$(2)$ From \eqref{firstcolumn}, indeed $T(n+1,1)\ge 2$. By  induction on  $t\ge 1$, assume $T(n+t,1)\ge 2t$. Then     either

$$T(n+t+1,1)=odd\ge 2(t+1)+1>2(t+1) \mbox{ or },$$
$$T(n+t+1,1)=even>T(n+t,1)\ge 2t\Rightarrow even\ge 2(t+1).$$
\end{proof}

The proposition below asserts that to verify the Sundaram property in an LR tableau  it is enough to check the odd entries in the rows below row $n$ in the first column of $T$.
\begin{prop}Let $T\in LR(\lambda/\mu,\nu)$ with $\nu$ even, on the alphabet $[2n]$, and $n\ge \ell(\mu)$.  Then, $T\in LRS(\lambda/\mu,\nu)$ if and only if $T$ satisfies \eqref{firstcolumn}
\begin{align}
T(n+t,1)=2i+1, &\mbox{ for some $i\ge 0$ and $t\ge 1$ }\Rightarrow i\ge t\ge 1.\nonumber 
\end{align}
  In other words $T\in LRS(\lambda/\mu,\nu)$ if and only  for all $t\ge 1$, either $T(n+t,1)=even\ge 2t$ or $T(n+t,1)=odd\ge 2t+1$.
\end{prop}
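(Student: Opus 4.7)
The forward implication is already covered by Lemma \ref{lem:firstcolumn}(1): if $T\in LRS(\lambda/\mu,\nu)$, the first-column condition \eqref{firstcolumn} holds. Moreover the ``in other words'' reformulation is equivalent to \eqref{firstcolumn} by Lemma \ref{lem:firstcolumn}(2) together with a parity check (an odd integer which is $\ge 2t$ is automatically $\ge 2t+1$). So the only substantive task is the converse: assuming \eqref{firstcolumn}, to show $T\in LRS(\lambda/\mu,\nu)$.

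The plan is to check the Sundaram property row by row, splitting at the threshold $n$. In the first $n$ rows nothing has to be checked: by the remark preceding Lemma \ref{lem:firstcolumn}, a Sundaram violation cannot occur in rows $1,\dots,n$, because an odd entry $T(k,j)=2i+1$ with $k\le n$ trivially satisfies $k\le n\le n+i$ for every $i\ge 0$. Hence it suffices to handle rows of index $n+t$ with $t\ge 1$.

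For such a row I would propagate the first-column hypothesis horizontally using semistandardness: rows of $T$ are weakly increasing, so $T(n+t,j)\ge T(n+t,1)$ at every cell $(n+t,j)\in Y(\lambda)$. By Lemma \ref{lem:firstcolumn}(2), the hypothesis \eqref{firstcolumn} forces $T(n+t,1)\ge 2t$, hence every entry of row $n+t$ is $\ge 2t$. Since the smallest odd integer which is $\ge 2t$ is $2t+1$, any odd entry $T(n+t,j)=2i+1$ in that row must satisfy $2i+1\ge 2t+1$, equivalently $i\ge t$, which is precisely the Sundaram inequality $n+t\le n+i$. There is no real technical obstacle here: the whole argument reduces the cell-by-cell Sundaram condition to a condition on column $1$ alone, via a single application of row-monotonicity on top of the already-established Lemma \ref{lem:firstcolumn}(2).
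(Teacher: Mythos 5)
Your proof is correct and follows essentially the same route as the paper: the forward direction is delegated to Lemma \ref{lem:firstcolumn}(1), and the converse reduces the cell-by-cell Sundaram condition to column $1$ via row-monotonicity together with the bound $T(n+t,1)\ge 2t$ from Lemma \ref{lem:firstcolumn}(2). The only cosmetic difference is that the paper splits the converse into two cases according to the parity of $T(n+t,1)$, whereas you handle both at once with the observation that an odd integer $\ge 2t$ is $\ge 2t+1$.
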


\begin{proof} The  "only if part" was proved in Lemma \ref{lem:firstcolumn}.

"If part". Assume that condition \eqref{firstcolumn} holds for $T\in LR(\lambda/\mu, \nu)$. We want to show that for $t\ge 1$, the  possible odd numbers in row $n+t$ of $T$ are $2i+1$ with  $t\le i$. Indeed, from \eqref{firstcolumn}, $T(k,1)\ge 2$ for $k\ge n+1$.

 If $T(n+t,1)=2i+1$ and $T(n+t,j)=2i'+1$ for some $j>1, t\ge 1$ then from the semistandard property of $T$, $i'\ge i\ge t$ and $i'\ge t$.

 If $T(n+t,1)$ is even and $T(n+t,j)=2i+1$ is odd for some $j>1$ and $i\ge 1$, then $T(n+t,1)<2i+1$ and $T(n+t,1)=even\le 2i$.
 From Lemma \ref{lem:firstcolumn}, $(2)$, one has $2t\le T(n+t,1)=even\le 2i$ which implies $t\le i$.
 \end{proof}

\begin{cor} \label{neglrs} Let $T\in LR(\lambda/\mu,\nu)$ with $\nu$ even, on the alphabet $[2n]$, and $n\ge \ell(\mu)$. $T$ does not satisfy Sundaram property if and only if $T(n+t,1)=2i+1$, for some  $t> i\ge 0$.

\end{cor}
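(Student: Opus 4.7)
The plan is to obtain this corollary as a direct logical negation of the biconditional just established in the preceding proposition. That proposition characterises membership in $LRS(\lambda/\mu,\nu)$ by the condition: for every $t\ge 1$ and every $i\ge 0$, if $T(n+t,1)=2i+1$ then $i\ge t$. I would therefore first rewrite this characterisation in a form that makes the negation transparent, namely ``for every $t\ge 1$, the first-column entry $T(n+t,1)$, whenever it is odd, has the form $2i+1$ with $i\ge t$.''

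Next I would apply the standard negation of a universal-implication: $T\notin LRS(\lambda/\mu,\nu)$ if and only if there exist $t\ge 1$ and $i\ge 0$ such that $T(n+t,1)=2i+1$ and $i<t$. Writing the inequality $i<t$ together with the constraint $i\ge 0$ gives exactly the statement $t>i\ge 0$, which is the condition in the corollary. Conversely, any witness $(t,i)$ with $t>i\ge 0$ automatically satisfies $t\ge 1$, so it is a legitimate witness of failure in the proposition.

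No additional combinatorial argument is required: the lemma and proposition that precede the corollary already do all the work, reducing the Sundaram property to the parity/size condition on the first column in rows $n+1,n+2,\dots$. The only point worth highlighting in the write-up is that the logical negation is clean because the proposition packages the Sundaram property as a first-column condition, so the negation lives entirely on the first column as well.

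The main (and really only) obstacle is a bookkeeping one, namely keeping the quantifier negation faithful and making sure that the parameter range $t\ge 1$, $i\ge 0$ is preserved symmetrically on both sides of the equivalence; this is handled by the observation $t>i\ge 0 \Rightarrow t\ge 1$.
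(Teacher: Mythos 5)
Your proposal is correct and coincides with the paper's treatment: the corollary is stated without a separate proof precisely because it is the direct logical negation of the biconditional in the preceding proposition, with the quantifier bookkeeping ($t>i\ge 0\Rightarrow t\ge 1$) handled exactly as you describe. Nothing further is needed.
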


A more detailed description of the Sundaram property violation on a Littlewood-Richardson tableau of even weight, comprising bad locations of even entries which report a previous failing,  is the following.
\begin{cor} \label{cor:violation} Let $T\in LR(\lambda/\mu,\nu)$ with $\nu$ even, on the alphabet $[2n]$, and $n\ge \ell(\mu)$. Then
\begin{enumerate}
\item $T(n+t,1)=even< 2t$, for some $t\ge 1$  only if $T(n+s,1)=odd< 2s+1$, for some $1\le s<t$.

\item If $s\ge 1$ is such that $(n+s,1)$ is the first cell, seen from the top, where $T(n+s,1)=odd<2s+1$ then   $$T(n+1,1)\ge 2,\dots,T(n+s-2,1)\ge 2(s-2),  T(n+s-1,1)= 2(s-1),$$ 
    and $$T(n+s,1)=2(s-1)+1, T(n+s+1,1)=2s<2(s+1).$$

\item  $T(n+s,1)=odd< 2s+1$ for some $s\ge 1$, only if $T(n+t,1)=even< 2t$, for some  $1\le s<t$.
\end{enumerate}
\end{cor}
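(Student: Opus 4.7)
The plan is to reduce both parts to two mechanisms already available: the first-column successor rule in the preparatory lemma, which says that whenever $T(k,1)$ is odd equal to $2i+1$, the cell $T(k+1,1)$ exists and equals $2(i+1)$; and the short inductive dichotomy used in the proof of Lemma \ref{lem:firstcolumn}(2), which propagates the Sundaram bound on odd first-column entries to all first-column entries.

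Part (2) I expect to fall out immediately. If $T(n+s,1)=2i+1$ with $i<s$, then by the first-column successor rule one gets $T(n+s+1,1)=2(i+1)=2i+2$. This is even, and from $i<s$ one has $2i+2\le 2s<2(s+1)$. Setting $t:=s+1$, the cell $T(n+t,1)$ is even and strictly smaller than $2t$, which is exactly the conclusion.

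Part (1) I would establish by contradiction. Assume $T(n+t,1)=2j$ with $2j<2t$, and suppose for contradiction that no odd violation of the Sundaram property occurs in rows $n+1,\dots,n+t$ of the first column, i.e.\ every odd value $T(n+s,1)=2i+1$ with $1\le s\le t$ satisfies $i\ge s$. I would then mimic the induction in Lemma \ref{lem:firstcolumn}(2) to show $T(n+s,1)\ge 2s$ for $s=1,\dots,t$: the base case $s=1$ uses $T(n+1,1)>T(n,1)\ge 0$ together with the hypothesis on odd entries (odd at row $n+1$ would be $\ge 3$; even at row $n+1$ is a positive even number, hence $\ge 2$), and the inductive step applies the same two-case argument (odd $\Rightarrow \ge 2(s+1)+1$, even $\Rightarrow >T(n+s,1)\ge 2s$ so $\ge 2(s+1)$). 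At $s=t$ this yields $T(n+t,1)\ge 2t$, contradicting $T(n+t,1)=2j<2t$. Hence an odd violation $T(n+s,1)=2i+1$ with $i<s$ must occur for some $s$ in $[1,t]$; since $T(n+t,1)$ is even, necessarily $s<t$, giving the desired conclusion.

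No step looks genuinely difficult: the first-column successor rule does the work for part (2), and the propagation induction does the work for part (1). The only thing to be careful about is that in part (2) one must ensure that the cell $T(n+s+1,1)$ exists, but this is built into the first-column successor lemma (it asserts both the existence and the value of $T(k+1,1)$), so no additional verification is needed.
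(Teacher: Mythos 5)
Your proof is correct, and it rests on the same two ingredients the paper uses --- the strict increase of the first column and the successor rule $T(k,1)=2i+1\Rightarrow T(k+1,1)=2(i+1)$ --- but the decomposition is genuinely different. For part (1) the paper argues by pigeonhole: since $T(n+1,1)<\cdots<T(n+t,1)=2a$ with $a<t$, only the $a-1$ positive even numbers below $2a$ are available for the $t-1$ intermediate cells, so some cell $(n+s,1)$ must be odd; it then takes the bottom-most such cell and shows, via the successor rule and strict increase, that a non-violating odd entry there would force $T(n+t,1)\ge 2t$. You instead prove the contrapositive by induction, propagating $T(n+s,1)\ge 2s$ up to row $n+t$ exactly as in Lemma \ref{lem:firstcolumn}(2); this avoids the counting step at the cost of essentially re-running that lemma under a localized hypothesis, and it is sound. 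For part (2) your argument is shorter and more general than the paper's: you apply the successor rule directly to an arbitrary violating row $s$, getting $T(n+s+1,1)=2(i+1)\le 2s<2(s+1)$, whereas the paper restricts to the minimal violating $s$, uses the contrapositive of part (1) to pin down $T(n+s,1)=2s-1$ exactly, and only then concludes $T(n+s+1,1)=2s$. Your version makes part (2) independent of part (1) and of any minimality assumption, which is a small but genuine simplification; your reliance on the successor rule for the existence of the cell $(n+s+1,1)$ is legitimate, since that lemma (driven by $\nu$ being even) asserts both existence and value.
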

\begin{proof} $(1)$
If $T(n+t,1)=2a$ for some $1\le a<t$, and $t\ge 2$, then there exists $1\le s<t$ such that $T(n+s,1)=odd<2s+1$.
One has only $a-1$ positive even numbers to distribute on $t-1>a-1$ cells $(n+1,1),\dots,(n+t-1,1)$. Hence, there exists at least one cell $(n+s,1)$ such that $T(n+s,1)=odd$ with $1\le s\le t-1$. Let $(n+s,1)$ be the first cell above the cell $(n+t,1)$ (seen from the bottom) where this occurs.
If $odd\ge 2s+1$, since $\nu$ is even, it follows $T(n+s+1,1)=2(s+1)<\cdots<T(n+t,1)$ and $T(n+t,1)\ge 2t>2a$ which is absurd.

$(2)$
If $s\ge 1$ is such that $(n+s,1)$ is the first cell, seen from the top, where $T(n+s,1)=odd<2s+1$ then from the previous implication, all previous cells in the first column of $T$ satisfy $T(n+1,1)\ge 2,\dots, T(n+s-1,1)\ge 2(s-1)$. Then standard-ness forces $T(n+s-1,1)=2(s-1)$, $T(n+s,1)=2(s-1)+1$ and $\nu$ even forces $T(n+s+1,1)=2s<2(s+1)$.

$(3)$ A consequence of $(2)$.
\end{proof}
\begin{ex}$(1)$ We resume  Example \ref{ex:sympviolation}: bad positions of even entries (in green) in the first column of an LR tableau with even content report a previous Sundaram property failing.

For $n=5$
\begin{align}
G_\mu(T)= \YT{0.15in}{}{
 {1},
 {\red\mathbf{2}},
 {\red\mathbf{3}},
 {\red\mathbf{4}},
 {\red\mathbf{8}},
}\in SST_{10}(1^5,0^5)\setminus SpT_{10}(1^5,0^5),\quad
 T=\YT{0.15in}{}{
 {{},{1}},
 {{},{2}},
 {{},{3}},
 {{}},
  {{}},
 {\red\mathbf{1}},
{\green\mathbf{2}},
{\green\mathbf{4}},
{\red\mathbf{5}},
{\green\mathbf{6}},
} \notin LRS(\lambda/\mu,\nu),\; \lambda=(2^3,1^7),\; \nu=(2,2,1^4,0^4).
\end{align}
$$rev(\lambda-\nu)=(0,0,1,0,0,0,1,1,1,1)$$
For $n=6$
\begin{align}
G_\mu(T)= \YT{0.15in}{}{
 {3},
 {{4}},
 {5},
 {\red\mathbf{6}},
 {{10}},
}\in SST_{12}(1^5,0^7)\setminus SpT_{12}(1^5,0^7),\quad
 T=\YT{0.15in}{}{
 {{},{1}},
 {{},{2}},
 {{},{3}},
 {{}},
  {{}},
 {{1}},
{{2}},
{{4}},
{\red\mathbf{5}},
{\green\mathbf{6}},
} \notin LRS(\lambda/\mu,\nu),\; \lambda=(2^3,1^7,0^2),\; \nu=(2,2,1^4,0^6).
\end{align}
$$rev(\lambda-\nu)=(0,0,1,0,0,0,1,1,1,1,0,0)$$
For $n=7$
\begin{align}
G_\mu(T)= \YT{0.15in}{}{
 {5},
 {{6}},
 {7},
 {{8}},
 {{12}},
}\in SpT_{14}(1^5,0^9),\quad
 T=\YT{0.15in}{}{
 {{},{1}},
 {{},{2}},
 {{},{3}},
 {{}},
  {{}},
 {{1}},
{{2}},
{{4}},
{{5}},
{{6}},
} \in LRS(\lambda/\mu,\nu),\; \lambda=(2^3,1^7,0^4),\; \nu=(2,2,1^4,0^8).
\end{align}
$(2)$ For $n=5$
\begin{align}
G_\mu(T)= \YT{0.15in}{}{
 {2},
 {{3}},
 {\red\mathbf{4}},
 {{8}},
 {{10}},
}\in SST_{10}(1^5,0^5)\setminus SpT_{10}(1^5,0^5),\quad
 T=\YT{0.15in}{}{
 {{},{1},1},
 {{},{2}},
 {{},{3}},
 {{}},
  {{}},
 {{2}},
{{4}},
{\red\mathbf{5}},
{\green\mathbf{6}},
} \notin LRS(\lambda/\mu,\nu),\; \lambda=(3,2^2,1^6,0),\; \nu=(2,2,1^4,0^4).
\end{align}
$rev(\lambda-\nu)=(0,1,1,1,0,0,0,1,0,1)$
\end{ex}
Since the symplectic property either on GT patterns or LR Sundaram tableaux  is tested in either cases by checking only the first column, it is enough to study the one-column case. If  $T\in LR(\lambda/\mu,\nu)$, with even weight and $\ell(\lambda)\le n$, has  left companion $G_\mu(T)$ with $\mu$ consisting  of a sole column  it implies that the possible columns beyond the first column of  $T$, in a row $i\in [2n]$  $T$  just have  $i$'s  which do no contribution for $G_\mu(T)$.

 On the other hand, given $T\in LR(\lambda/\mu,\nu)$ as above, if $C$ is the first column of $G_\mu(T)$ then $C$ is the left companion of the LR tableau $\widehat T\in LR(\widehat\lambda/(1^{\ell(\mu)}),\nu)$ obtained from $T$ by keeping the first column of $T$, that is, the first column of $\mu$ and the  entries $T(\ell(\mu)+1,1),\dots,T(\ell(\lambda),1)$, and adding
 $\nu_i$, $i$'s, to row $i$ in case $i$ is not an entry in the first column of $T$ which gives $\hat\lambda_i=\nu_i+1$, and $\nu_i-1$ which gives $\hat\lambda_i=\nu_i$,  otherwise.
  
  We write $\widehat T=\tilde T\cup Y(\tilde \nu)$ where $\tilde T$ is the first column of $T$ and  $Y(tilde \nu)$ is the Yamanouchi tableau of shape $\tilde \nu)$ such that $\tilde \nu$ is the content of $T$ minus the its first column. Observe that by deleting the first column of $T$ it remains still an LR tableau. That is,
  $Y(\hat \nu)$ is the rectification of $T$ minus its first column.
 Therefore $\widehat T$ has the same weight as $T$ and its one-column left companion $G_{(1^{\ell(\mu)},0^{2n-\ell(\mu)})}(\widehat T)$ is the first column of the left companion of $T$, $G_\mu(T)$. Therefore $G_{(1^{\ell(\mu)},0^{2n-\ell(\mu)})}(\widehat T)$ and thereby the first column of $G_\mu(T)$ is completely characterized by its weight $rev(\widehat \lambda-\nu)$. This give the following proposition.
 
 \begin{prop} With the above setting, the first column of $G_\mu(T)$ is completely characterized by $rev(\widehat\lambda-\nu)$.
 \end{prop}

\begin{ex}\label{reduce1column} Let $n=7$ and $\nu=(4^2,3^2,2^2, 1^6,0^2)$ even partition
\begin{align} T=\YT{0.15in}{}{
 {{},{},{},{},{1}},
 {{},{},{},{1},{2}},
 {{},{},{1},{2},{3}},
 {{},{2},{3},{4}},
 {,3,4,5},
{\mathbf{1},6},
{2,7},
{4,8},
{\mathbf{5},9},
{6,10},
{\mathbf{11}},
{12},
}\quad G_\mu(T)=  \YT{0.15in}{}{
 {5,,{},{}},
 {6,{},{}},
 {7,{}},
 {{8}},
 {{12}},
}
\end{align}
Let \begin{align}\tilde T=\YT{0.15in}{}{
 {{}},
 {{}},
 {{}},
 {{}},
 {{}},
{{1}},
{2},
{4},
{{5}},
{6},
{{11}},
{12},
}
\quad
  Y(\tilde \nu)=\YT{0.15in}{}{
 {{1},{1},1},
 {{2},{2},2},
 {{3},3,3},
 {{4},4},
 {{5}},
{{6}},
{7},
{8},
{9},
{10},
}    \quad \rightarrow \quad \widehat T=\tilde T\cup Y(\tilde\nu)=\YT{0.15in}{}{
 {{},1,1,1},
 {{},2,2,2},
 {{},3,3,3},
 {{},4,4},
 {{},5},
{{1},6},
{2,7},
{4,8},
{{5},9},
{6,10},
{{11}},
{12},
}\quad G_{(1^{\ell(\mu)},0^{2n-\ell(\mu)})}(\widehat T)=  \YT{0.15in}{}{
 {5},
 {6},
 {7},
 {{8}},
 {{12}},
}
      \end{align}
      $$rev(\widehat \lambda-\nu)=(4^3,3,2^6,1^2,0^2)-\nu=(0^4,1^4,0^3,1,0^2). $$
\end{ex}

\begin{thm}(Main result)\label{th:main} Let $T\in LR(\lambda/\mu,\nu)$ with $\nu$ even, on the alphabet $[2n]$, and $ \ell(\mu)\le n$.
 $T$ does not satisfy the Sundaram property if and only if $G_\mu(T)$ is not symplectic.

Moreover,  in this case, there exists a unique $t\ge 0$ such that
\begin{enumerate}

\item  ${n+t+1}$  is  the minimal row number of $T$ where the  Sundaram property  violation occurs.

\item
  \begin{align}&T(n+t,1)=2t,\; T(n+t+1,1)= 2t+1,\; T(n+t+2,1)=2(t+1),\\
 &T(n+1,1)\ge 2,\, T(n+2,1)\ge 4,\dots, T(n+t+1-2,1)\ge 2(t+1-2).
 \end{align}
 \item  the maximal row    of $G_\mu(T)$ where a symplectic violation  occurs is among the bottom most $t+1$ cells,  $(\ell(\mu),1)$,  $(\ell(\mu)-1,1),\dots$,  $ (\ell(\mu)-t,1) $ of the first column of $G_\mu(T)$.
\end{enumerate}
\end{thm}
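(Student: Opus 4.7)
I would first dispatch (1)$\Leftrightarrow$(2) as a direct consequence of Corollary \ref{cor:violation}(2) together with Lemma \ref{lem:firstcolumn}(2).  If $n+t+1$ is the smallest Sundaram-violating row, Corollary \ref{cor:violation}(2) with parameter $s=t+1$ gives $T(n+t+1,1)=2t+1$ and $T(n+t+2,1)=2(t+1)$, Lemma \ref{lem:firstcolumn}(2) applied up through row $n+t$ (where \eqref{firstcolumn} still holds) yields $T(n+i,1)\ge 2i$ for $i\le t$, and the strict column-increase then pins $T(n+t,1)=2t$; the reverse direction is immediate.  The heart of the theorem is therefore the equivalence with $G_\mu(T)$ being non-symplectic, together with the identification of the violation row claimed in (3).

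The key move is to translate the symplectic condition on $G_\mu(T)$ into a direct inequality on the first column of $T$.  Set $d=n-\ell(\mu)\ge 0$ and $a_j=T(\ell(\mu)+j,1)$ for $1\le j\le \ell(\lambda)-\ell(\mu)$.  A case split in Definition \ref{leftcomp} over $r\le\ell(\mu)$, $r=\ell(\mu)+1$, and $r>\ell(\mu)+1$ collapses to the uniform identity
\[
\ell(\mu^{(2n-r+1)})\ge i \;\Longleftrightarrow\; f(r)\ge i+r-\ell(\mu)-1,\qquad f(r):=|\{j:a_j<r\}|.
\]
By Lemma \ref{lem:leftcomp}, $G_\mu(T)(i,1)=r_i=2n-\rho_i+1$ with $\rho_i$ the largest $r$ satisfying this inequality, and the symplectic condition $r_i\ge 2i-1$ for all $i\in[1,\ell(\mu)]$ becomes, after substituting $j=i+r-\ell(\mu)-1$,
\[
G_\mu(T)\text{ is symplectic} \;\Longleftrightarrow\; a_j\ge 2(j-d)-1 \text{ for all } j \text{ with } a_j<\ell(\mu)+j.
\]

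With this translation in hand, (2)$\Rightarrow$(3) becomes arithmetic: $a_{d+t+2}=2(t+1)$ violates the bound at $j=d+t+2$ (since $2(t+1)<2(t+2)-1$), producing a symplectic violation at row $i^{*}=\ell(\mu)+j-a_j=n-t$; the estimates $a_{d+i}\ge 2i$ for $i<t$ (Lemma \ref{lem:firstcolumn}(2)) together with $a_{d+t}=2t$ and $a_{d+t+1}=2t+1$ from (2) preclude any smaller violating $j$, so $n-t$ is the maximal violation row.  The LR bound $a_{d+t+1}\ge d+t+1$ combined with $a_{d+t+1}=2t+1$ forces $t\ge d$, hence $n-t=\ell(\mu)+d-t\in[\ell(\mu)-t,\ell(\mu)]$, which is (3).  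For the converse, given $G_\mu(T)$ non-symplectic I would pick a minimal $j^{*}$ with $a_{j^{*}}\le 2(j^{*}-d)-2$; then $T(n+(j^{*}-d),1)=a_{j^{*}}<2(j^{*}-d)$, and the contrapositive of Lemma \ref{lem:firstcolumn}(2) makes $T$ violate the Sundaram property, while minimality of $j^{*}$ forces $j^{*}=d+t+2$ for the Sundaram parameter $t$ from (1), recovering (2).

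\textbf{Main obstacle.}  The technical heart is the uniform equivalence $\ell(\mu^{(2n-r+1)})\ge i \Leftrightarrow f(r)\ge i+r-\ell(\mu)-1$ and its clean reformulation as $a_j\ge 2(j-d)-1$; this requires careful bookkeeping across the three regimes $r\le\ell(\mu)$, $r=\ell(\mu)+1$, and $r>\ell(\mu)+1$ in Definition \ref{leftcomp}.  Once this translation is in place, the equivalence with the Sundaram property and the identification of the maximal violation row at $n-t\in[\ell(\mu)-t,\ell(\mu)]$ reduce to the arithmetic comparison of $a_j\ge 2(j-d)$ (Sundaram) against $a_j\ge 2(j-d)-1$ (symplectic) using (2).
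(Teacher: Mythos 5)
Your proposal is correct, and it takes a genuinely different route from the paper's proof. The paper argues directly on the nested sequence $\mu^{(2n)}\supseteq\cdots\supseteq\mu^{(1)}$: it first spells out the cases $t=1,2$, then in the general case translates the pattern $T(n+1,1)=2,\dots,T(n+t,1)=2t$, $T(n+t+1,1)=2t+1$, $T(n+t+2,1)=2(t+1)$ into a chain of equalities and strict inequalities among the lengths $\ell(\mu^{(2n-k)})$, reads off the first column of $G_\mu(T)$ from the positions of the strict drops, and then handles the remaining configurations (odd non-violating entries, and the cases $n=\ell(\mu),\ell(\mu)+1,\dots,\ell(\mu)+t$) by an informal "gluing of flat subsequences" argument. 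You instead compress all of this into a single closed-form criterion: writing $d=n-\ell(\mu)$ and $a_j=T(\ell(\mu)+j,1)$, you show $G_\mu(T)$ is symplectic iff $a_j\ge 2(j-d)-1$ for all $j$, while the Sundaram property (by the paper's Proposition on first-column reduction and Corollary \ref{neglrs}) is $a_j\ge 2(j-d)$ for $j>d$; the evenness of $\nu$ bridges the off-by-one gap exactly as needed. I checked your pivotal identity $\ell(\mu^{(2n-r+1)})\ge i\Leftrightarrow f(r)\ge i+r-\ell(\mu)-1$: it holds in all three regimes (for $r>\ell(\mu)$ the LR bound $a_j\le\ell(\mu)+j$ makes the first $r-\ell(\mu)-1$ rows count automatically), and since the column is strictly increasing, $f(r)\ge j\Leftrightarrow a_j<r$ and $g(j)=\ell(\mu)+j-a_j$ is weakly decreasing, which is what makes $\rho_i$ and hence $r_i$ computable in closed form. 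Your identification of the maximal violating row as $g(j^*)=n-t$ for the minimal violating index $j^*=d+t+2$, and the inequality $t\ge d$ placing it in $[\ell(\mu)-t,\ell(\mu)]$, are both correct; note that this actually sharpens part (3) of the theorem (the paper only locates the row among the bottom $t+1$ cells, whereas you pin it to exactly $\ell(\mu)-(t-d)=n-t$). The one place that still needs to be written out carefully is the bookkeeping you yourself flag (the three regimes of $r$, plus the degenerate cases where row $\ell(\mu)+j$ does not exist, where one should set $a_j=+\infty$); and in passing you should justify that the \emph{minimal} Sundaram-violating row is attained in the first column (this follows from the proof of the paper's Proposition 4, since a violation in row $n+s$, column $j>1$, together with no column-one violation in rows $n+1,\dots,n+s$, would force $T(n+s,1)\ge 2s$ and hence no violation in row $n+s$ at all). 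With those details filled in, your argument is complete, uniform in $t$ and in $n-\ell(\mu)$, and avoids the case analysis of the paper.
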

\begin{proof} Let $ G=G_\mu(T)$ and recall $\ell(\lambda)-n\le n$. From Corollary \ref{neglrs}, let $t+1\ge 1$ be  minimal such that $T(n+t+1,1)=2i+1$ for some  $0\le i<t+1$.

For readability we start to spelling out the cases $t+1=1,2$.

If $\mathbf{t+1=1}$, $T(n+1,1)=1$  and $n=\ell(\mu)$. Since $\nu$ is even,  $T(n+2,1)=2$. Hence $$2\le \ell(\lambda)-n\le n\Rightarrow n=\ell(\mu)\ge 2.$$ We show that $T(n+1,1)=1$ means a symplectic violation in the cell $(\ell(\mu),1)$ of $G_\mu(T)$.

One has, $\ell(\mu)\ge 2$, and $T(\ell(\mu)+1,1)=1$,   $T(\ell(\mu)+2,1)=2$ is equivalent to
\begin{align}&\ell(\mu)=\ell(\mu^{2n})=\ell(\mu^{2n-1})=\ell(\mu^{2n-2})\ge\ell(\mu^{2n-3})\\
&\Leftrightarrow \\
&G(\ell(\mu),1)\le 2\ell(\mu)-2=2(\ell(\mu)-1)\Leftrightarrow G(\ell(\mu),1)\ngeq 2\ell(\mu)-1.
\end{align}
Hence,  $G_\mu(T)$ is not symplectic with bottom most symplectic violation in the cell $(\ell(\mu),1)$.

If $\mathbf{t+1=2}$,  by definition of $t+1$, $T(n+2,1)=3$ and $T(n+1,1)=2$. On the other hand, $\nu$ is even,  so $T(n+3,1)=4$ and $3\le \ell(\lambda)-n\le n$ and thus $n\ge 3$, and
$$T(n+1,1)=2, \;T(n+2,1)=3, T(n+3,1)=4.$$

We show that $T(n+2,1)=3$ means a symplectic violation in a cell of the first column of $G_\mu(T)$.

\medskip
\emph{Case $n=\ell(\mu)\ge 3$}:   $T(\ell(\mu)+1,1)=2$, $T(\ell(\mu)+2,1)=3$,
$T(\ell(\mu)+3,1)=4$.

This translates to

\begin{align*}&\ell(\mu)=\ell(\mu^{2n})>\ell(\mu^{2n-1})=\ell(\mu^{2n-2})=\ell(\mu^{2n-3})=\ell(\mu^{2n-4})\ge\cdots \\
&\Leftrightarrow \\
&G(\ell(\mu),1)=2n\ge 2n-1,\;\;
G(\ell(\mu)-1,1)\le 2n-4=2(\ell(\mu)-1)-2 \ngeq 2(\ell(\mu)-1)-1
\end{align*}
So
$G$ is not symplectic with bottom most symplectic violation in the cell $(\ell(\mu)-1,1)$.

\medskip

\emph{Case  $n=\ell(\mu)+1\ge 3$} in which case and $\ell(\mu)\ge 2$, $T(\ell(\mu)+1,1)=1$, $T((\ell(\mu)+1)+1,1)=2$,
$T((\ell(\mu)+1)+2,1)=3$, $T((\ell(\mu)+1)+3,1)=4$.

This translates to

\begin{align*}&\ell(\mu)=\ell(\mu^{2n})=\ell(\mu^{2n-1})=\ell(\mu^{2n-2}=\ell(\mu^{2n-3})=\ell(\mu^{2n-4}\ge\cdots\nonumber \\
&\Leftrightarrow\\
 &G(\ell(\mu),1)\le 2n-4=2(\ell(\mu)+1)-4=2\ell(\mu)-2\ngeq 2\ell(\mu)-1.
\end{align*}
Hence,  $G_\mu(T)$ is not symplectic with bottom most symplectic violation in the cell $(\ell(\mu),1)$.
\medskip

\bigskip

Le $\mathbf{t+1}\ge 1$ with $t\ge 0$, be  minimal such that $T(n+t+1,1)=2i+1$ for some  $0\le i\le t$. From Corollary \ref{cor:violation}, this means that
$T(n+k,1)$ does not violate the Sundaram property for $1\le k\le t$ and  $T(n+k,1)\ge 2k$, for $1\le k\le t$. Therefore, since $T(n+t,1)\ge 2t$, $T(n+t+1,1)\le 2t+1$ and
$$2\le T(n+1,1)<\cdots<T(n+t,1)<T(n+t+1,1)\le 2t+1,$$
it follows that $$T(n+t,1)=2t<T(n+t+1,1)= 2t+1$$

 On the other hand, $\nu$ is even,  so $T(n+t+2,1)=2(t+1)$. Thus  $\mathbf{n+t+1}\ge n+1$, with $t\ge 0$, is  the minimal row of $T$ where the  Sundaram property  violation occurs if and only if
 $$ T(n+1,1)<\cdots <T(n+t+1-2,1)<T(n+t,1)=2t,\; T(n+t+1,1)= 2t+1,\; T(n+t+2,1)=2(t+1),$$
 $$T(n+1,1)\ge 2\times 1,\, T(n+2,1)\ge 2\times 2,\dots, T(n+t+1-2,1)\ge 2(t+1-2).$$
 Moreover $t+2\le \ell(\lambda)-n\le n$ and thus $n\ge t+2$.

 \medskip

 \emph{Case $n=\ell(\mu)\ge t+1+1$}:

\medskip
Let $T(n+1,1)=2, T(n+2,1)=4,\dots, T(n+t-1),1)=2(t-1),\;$ and $T(n+t,1)=2t,\;$ $T(n+t+1,1)= 2t+1,$ $\; T(n+t+2,1)=2(t+1)$. Note $2,4,\dots, 2(t-1)$ are the first positive $t-1$ even numbers.

 This translates to
\begin{align}\label{vip}\\
&\ell(\mu)=\ell(\mu^{2n})>\nonumber\\
&\ell(\mu^{2n-1})=\ell(\mu^{2n-2})>\nonumber\\
&\ell(\mu^{2n-3})=\ell(\mu^{2n-4})>\nonumber\\
&\vdots \qquad\qquad\qquad\qquad\qquad\nonumber\\
&\ell(\mu^{2n-(2(t+1-2)-1)})=\ell(\mu^{2n-2(t+1-2)})>\nonumber
\\
&\ell(\mu^{2n-(2(t+1-2)+1)})=\ell(\mu^{2n-(2(t+1-2)+2)})=\ell(\mu^{2n-(2(t+1-2)+3)})=\ell(\mu^{2n-(2(t+1-2)+4)})\ge\cdots \label{flat} \\
&\Leftrightarrow \nonumber\\
&G(\ell(\mu),1)=2n\ge 2n-1\;,  G(\ell(\mu)-1,1)=2n-2\ge 2(n-1)-1, \dots,\nonumber\\ &G(\ell(\mu)-(t+1-2),1)=2\ell(\mu)-2(t+1-2)=2(\ell(\mu)-(t+1-2))\ge 2(\ell(\mu)-(t+1-2))-1,\nonumber\\
&G(\ell(\mu)-(t+1-1),1)\le 2n-(2(t+1-2)+4)=2\ell(\mu)-(2(t+1-2)+4)=2(\ell(\mu)-t)-2\nonumber\\
&\ngeq 2(\ell(\mu)-t)-1
\end{align}
Note $\ell(\mu^{2n-2(t+1-2)})=\ell(\mu)-(t+1-2)-1)=\ell(\mu)-(t+1-1)=\ell(\mu)-t$. Furthermore, $t$ is the number of $>$ in \eqref{vip} before arriving to the flat sequence in \eqref{flat} which is also the number of the first $t$ non negative even numbers, $t=\#\{0,2,4,\dots,2(t+1-2)\}$.

Thus $G_\mu(T)$ is not symplectic with bottom most symplectic violation in the cell $(\ell(\mu)-t,1)$.

\medskip
In the remaining  cases there exists at least one odd number among

$$T(n+1,1)\ge 2\times 1, T(n+2,1)\ge 2\times 2,\dots,
T(n+t-2),1)\ge 2\times (t-2),T(n+t-1),1)\ge 2\times (t-1).$$

This means, at least one of the even numbers $2,4,\dots, 2(t-1)$ is replaced by an odd number, that is, for some $1\le i\le t-1$, $2i$ is replaced by $2i+1$ and $2(i+1)$ is preserved in the list.
Indeed such  odd numbers do not violate the Sundaram condition. Each time we do this we glue two next flats subsequences in \eqref{vip} and reduce by one unity the number of $>$.  The  flat tail \eqref{flat} can be longer but the flat portion
$\ell(\mu^{2n-(2(t+1-2)+1)})=\ell(\mu^{2n-(2(t+1-2)+2)})=\ell(\mu^{2n-(2(t+1-2)+3)})=\ell(\mu^{2n-(2(t+1-2)+4)})$ is preserved. Therefore,  in   the first column of $G_\mu(T)$, from the bottom, the first symplectic violation   occurs among the bottom most $t+1$ cells $(\ell(\mu),1)$,  $(\ell(\mu)-1,1),\dots, (\ell(\mu)-t,1)$.

\end{proof}

\begin{cor} The Berenstein-Gelfand-Zelevinsky LR model on left Gelfand-Tsetlin patterns \cite[Theorem]{BZphy} restricts to symplectic left Gelfand-Tsetlin patterns as left companions of LR-Sundaram tableuax. In other words, the Berenstein-Gelfand-Zelevinsky LR model in \cite[Theorem 4.3]{BZphy} restricted to symplectic Gelfand-Tselin patterns is in bijection LR symplectic (LR-Sundaram) tableaux.
\end{cor}

\medskip

\subsection{Examples}
\label{ex:example}
In all examples below $T$ is an LR tableau with even weight.
\begin{enumerate}
\item Let $n=5$ and
 \begin{align} T=\YT{0.15in}{}{
 {{},{},{},{}},
 {{},{},{},{1}},
 {{},{},{1},{2}},
 {{},{2}},
 {{}},
{\mathbf{\color{red}1}},
{2},
}
\quad
T=\YT{0.15in}{}{
 {{},{},{},{},{1}},
 {{},{},{},{1},{2}},
 {{},{},{1},{2},{3}},
 {{},{2},{4}},
 {,5},
{\mathbf{\color{red}1},6},
{2},
}
\quad\mbox{or } T=\YT{0.15in}{}{
 {{},{},{},{},{1}},
 {{},{},{},{1},{2}},
 {{},{},{1},{2},{3}},
 {{},{2},{4}},
 {,5},
{\mathbf{\color{red}1},6},
{2,7},
{8}
}\quad \mbox{ and }
G_\mu(T)=  \YT{0.15in}{}{
 {4,,{},{}},
 {5,{},{}},
 {6,{}},
 {{7}},
 {\mathbf{\color{red}8}},
}
\end{align}
where just the first column of $G_\mu(T)$ is exhibited
\begin{align}\ell(\mu)=\ell(\mu^{(10)})&=5=\ell(\mu^{(10-1)})=
 \mathbf{\color{red} \ell(\mu^{(10-2)})=5}\\
 >\ell(\mu^{(10-3)})&=4\nonumber\\
 >\ell(\mu^{(10-4)})&=3\nonumber\\
 >\ell(\mu^{(10-5)})&=2\nonumber\\
 >\ell(\mu^{10-6})&=1\nonumber\\
 >\ell(\mu^{10-7})&=0\nonumber\\
 &=\ell(\mu^{10-8})=\ell(\mu^{10-9})=0\nonumber
 \end{align}

 $T$ fails the Sundaram property and $G_\mu(T)$ fails the symplectic property (in red), with $t+1=1$.
 \item Let $n=7$,

\begin{align} T=\YT{0.15in}{}{
 {{},{},{},{},{1}},
 {{},{},{},{1},{2}},
 {{},{},{1},{2},{3}},
 {{},{2},{3},{4}},
 {,3,4,5},
{\mathbf{1},6},
{2,7},
{4,8},
{\mathbf{5},9},
{6,10},
{\mathbf{11}},
{12},
}\quad G_\mu(T)=  \YT{0.15in}{}{
 {5,,{},{}},
 {6,{},{}},
 {7,{}},
 {{8}},
 {{12}},
}
\end{align}
\begin{align*}\ell(\mu)=\ell(\mu^{(14)})=5&=\ell(\mu^{(14-1)})=5=
  \ell(\mu^{(14-2)})=5\\
  >\ell(\mu^{14-3})&=4\\
  &=\ell(\mu^{14-4})=4
  =\ell(\mu^{14-5})=4=\ell(\mu^{14-6})=4\\
  >\ell(\mu^{14-7})&=3\\
  >\ell(\mu^{14-8})&=2\\
  >\ell(\mu^{14-9})&=1\\
  >\ell(\mu^{14-10})&=0\\
  &=\ell(\mu^{14-11})=\ell(\mu^{14-12})=\ell(\mu^{14-13})=0
  \end{align*}
For $n=7$, $T$ satisfies the  Sundaram property and $G$ is symplectic.
  \item For $n=5$
  \begin{align}\quad T=\YT{0.15in}{}{
 {{},{},{},{},{1}},
 {{},{},{},{1},{2}},
 {{},{},{1},{2},{3}},
 {{},{2},{3},{4}},
 {,3,4,5},
{\mathbf{\color{red}1},6},
{2,7},
{4,8},
{\mathbf{\color{red}5},9},
{6,10},
}
\quad G_\mu(T)=  \YT{0.15in}{}{
 {1,,{},{}},
 {{\color{red}2},{},{}},
 {3,{}},
 {{4}},
 {{\color{red}8}},
}
\end{align}

  \begin{align}\ell(\mu)=\ell(\mu^{(10)})=5&=\ell(\mu^{(10-1)})=
  \mathbf{\color{red}\ell(\mu^{(10-2)})=5}\\
  >\ell(\mu^{(10-3)})&=4\\
  &=\ell(\mu^{(10-4)})=\ell(\mu^{(10-5)})=\mathbf{\color{red}\ell(\mu^{10-6})=4}\\
  >\mathbf{\color{red}\ell(\mu^{10-7})=3}&\\
  >\mathbf{\color{red}\ell(\mu^{10-8})=2}&\\
  >\ell(\mu^{10-9})=1&
  \end{align}

  $T(5+1,1)=1\ngeq 2\times 1+1 =3$, $T(5+4,1)=5\ngeq 2\times 4+1$ and $T$ fails the Sundaram property with $t+1=1$, and $G_\mu(T)$ is not symplectic.

  For $n=6$,
  $T$ is not Sundaram with $t+1=3$.

  If $T(6+1,j)=odd$ for some $j$ then $odd\ge 3>2=T(6+1,1)$; $T(6+2,1)=4 even$, and if $T(6+2,j)=odd\ge 5\ge 2\times 2+1 $ for some $j$; $\mathbf{\color{red}T(6+3,1)=5\ngeq 2\times 3+1}$; $T(6+4,1)=6$, $T(6+5)=11\geq 2\times 5+1$
  and $G(T)= \YT{0.15in}{}{
 {3,,{},{}},
 {4,{},{}},
 {5,{}},
 {{\mathbf{\color{red}6}}},
 {{10}},
}
$ is not  symplectic:

 $i=4,\quad G(3,1)=5=2\times 3-1$; $\mathbf{\color{red}\mathbf{G(4,1)=6}\ngeq 2\times 4-1}$, $\quad G(5,1)=10> 2\times 5-1$.

 \begin{align}\ell(\mu)=\ell(\mu^{(12)})=5&=\ell(\mu^{(11)})=
  \ell(\mu^{(10)})\nonumber\\
  >\ell(\mu^{(9)})&=4\nonumber\\
  &=\ell(\mu^{(8)})
  =\ell(\mu^{(7)})=\mathbf{\color{red}\ell(\mu^6)=4}\\
  >\ell(\mu^5)&=3\nonumber\\
  >\ell(\mu^4)&=2\nonumber\\
  >\ell(\mu^3)&=1\nonumber\\
  >\ell(\mu^2)&=\ell(\mu^1)=0\nonumber
  \end{align}

   For $n=7$, $T$ is Sundaram and $G$ is symplectic.

 \item Let $n=5$ and  \begin{align}\quad \quad T=\YT{0.15in}{}{
 {{},{},{},{},{1}},
 {{},{},{},{1},{2}},
 {{},{},{1},{2},{3}},
 {{},{1},{3},{4}},
 {,2,4,5},
{2,3},
{4,6},
{\mathbf{\color{red}5},7},
{6,8},
}
\quad G_\mu(T)=  \YT{0.15in}{}{
 {2,,{},{}},
 {3,{},{}},
 {\mathbf{\color{red}4},{}},
 {{8}},
 {{10}},
}\end{align}

\begin{align}\ell(\mu)=\ell(\mu^{(10)})&=5\nonumber\\
>\ell(\mu^{(10-1)})&=4\nonumber\\
&=\ell(\mu^{(10-2)})=4\nonumber\\
  >\ell(\mu^{(10-3)})&=3\nonumber\\
  &=\ell(\mu^{(10-4)})=\ell(\mu^{(10-5)})=\mathbf{\color{red}\ell(\mu^{10-6})=3}\\
  >\ell(\mu^{10-7})&=2>\ell(\mu^{10-8})=1>\ell(\mu^{10-9})=0\nonumber
  \end{align}
$T$ is not symplectic with $t+1=3$.
  \end{enumerate}
\bibliography{sample17}
\bibliographystyle{alpha}
\end{document}